\theoremstyle{plain}
\newtheorem{theorem}[subsection]{Theorem}
\newcommand\Thm[1]{Theorem~\ref{#1}}
\newtheorem{corollary}[subsection]{Corollary}
\newcommand\Cor[1]{Corollary~\ref{#1}}
\newtheorem{lemma}[subsection]{Lemma}
\newcommand\Lem[1]{Lemma~\ref{#1}}
\newtheorem{proposition}[subsection]{Proposition}
\newcommand\Prop[1]{Proposition~\ref{#1}}
\theoremstyle{definition}
\newtheorem{example}[subsection]{Example}
\theoremstyle{remark}
\newtheorem{remark}[subsection]{Remark}
\newcommand\Rem[1]{Remark~\ref{#1}}
\newcommand{\emptyprop}{q}
\newcommand \after{\circ}
\newcommand \ann[2]{\operatorname{Ann}_{#1}(#2)}
\renewcommand\iff{if and only if}%took out commas July 2005
\newcommand\into{\hookrightarrow}
\newcommand \inverse[2]{{#1^{-1}(#2)}}
\newcommand \iso{\cong}
\newcommand \map[1]{{\newcommand{\tmpprop}{#1q}  \if\tmpprop\emptyprop \to\else \xrightarrow{{\phantom{i}{#1}\phantom{i}}}\fi}} 
\newcommand \maxim{\mathfrak m}
\newcommand \nat{\mathbb N}
\newcommand \op\operatorname
\newcommand \pol[2]{#1[#2]}
\newcommand \pr{\mathfrak p}
\newcommand \primary{\mathfrak g}
\newcommand \range [2]{#1,\dots,#2}
\let\sub\subseteq
\newcommand \zet{\mathbb Z}
\newcommand\en{\wedge}
\newcommand\of{\vee}
\newcommand \Exactseq [3]{0\to {#1}\to {#2}\to {#3}\to 0}
\newcommand{\commdiagram}[9][]{%
\begin{equation}
{\newcommand{\tmpprop}{#1q} 
\if\tmpprop\emptyprop \relax\else \label{#1}\fi}
\begin{aligned}%
\mbox{
\begin{picture}(130,90)%
\put(120,70){\vector( 0,-1){50}}%
\put(10,80){\vector( 1, 0){100}}%
\put(0,70){\vector( 0,-1){50}}%
\put(10,10){\vector( 1, 0){100}}%
\put(115,80){\makebox(0,0)[l]{$#4$}}%
\put(5,80){\makebox(0,0)[r]{$#2$}}%
\put(115,10){\makebox(0,0)[l]{$#9$}}%
\put(5,10){\makebox(0,0)[r]{$#7$}}%
\put(-3,50){\makebox(0,0)[r]{$#5$}}
\put(123,50){\makebox(0,0)[l]{$#6$}}
\put(60,3){\makebox(0,0)[c]{$#8$}}
\put(60,88){\makebox(0,0)[c]{$#3$}}
\end{picture}}
\end{aligned}
\end{equation}}
\newcommand\commtrianglefront[7][]{%
\begin{equation}
{\newcommand{\tmpprop}{#1q} 
\if\tmpprop\emptyprop \relax\else \label{#1}\fi}
\begin{aligned}%
\mbox{
\begin{picture}(120,80)%
\put(55,68){\vector(-1,-2){30}}
\put(65,68){\vector(1,-2){30}}
\put(30,5){\vector(1,0){60}}
\put(60,75){\makebox(0,0)[c]{$#2$}}
\put(25,5){\makebox(0,0)[r]{$#4$}}
\put(95,5){\makebox(0,0)[l]{$#6$}}
\put(60,0){\makebox(0,0)[c]{$#5$}}
\put(37,43){\makebox(0,0)[r]{$#3$}}
\put(83,43){\makebox(0,0)[l]{$#7$}}
\end{picture}}
\end{aligned}
\end{equation}}
\newcommand\commtriangleback[7][]{%
\begin{equation}
{\newcommand{\tmpprop}{#1q}
\if\tmpprop\emptyprop \relax\else \label{#1}\fi}
\begin{aligned}%
\mbox{
\begin{picture}(120,80)%
\put(55,70){\vector(-1,-2){30}}
\put(65,70){\vector(1,-2){30}}
\put(30,5){\vector(1,0){60}}
\put(60,75){\makebox(0,0)[c]{$#2$}}
\put(25,5){\makebox(0,0)[r]{$#6$}}
\put(95,5){\makebox(0,0)[l]{$#4$}}
\put(60,0){\makebox(0,0)[c]{$#5$}}
\put(37,43){\makebox(0,0)[r]{$#7$}}
\put(83,43){\makebox(0,0)[l]{$#3$}}
\end{picture}}
\end{aligned}
\end{equation}}
\newcommand\suppcyc[1]{\mathbf 1_{#1}}
\newcommand\binord[1]{\mathfrak o({#1})}
\newcommand \fcyc[1]{\texttt{cyc}(#1)}
\newcommand\val[1]{\op{val}(#1)}
\newcommand\aleq{\preceq}
\newcommand\fl[1]{\mathfrak{#1}}
\newcommand\ndo[1]{\op{End}(#1)}
\newcommand\tec[1]{\op{tec}(#1)}
\newcommand\prim[2]{\mathstrut_{#1}\!#2}
\newcommand\finlen[1]{\op {finlen}(#1)}
\newcommand\reduc{reductive}
\newcommand\rknull{rank-nullity theorem}
\newcommand\order[1]{\op{ord}(#1)}
\newcommand\ord{\mathbf O}
\newcommand\Ssum{\bigoplus}
\newcommand\ssum{\oplus}
\newcommand \len[1]{\op{len}(#1)}
\newcommand \lenmod[2]{\op{len}_{#1}(#2)}
\newcommand \cohrk[1]{\op{coh}(#1)}
\newcommand  \topc{top-compressible}
\title {Binary modules and their endomorphisms}
\author{Hans Schoutens}
\date\today%{14.12.04}
\address{Department of Mathematics\\
365 5th Avenue\\
the CUNY Graduate Center\\
New York, NY 10016, USA}
\begin{document}
\begin{abstract} 
Based upon properties of ordinal length, we introduce a new class of modules, the binary modules, and study their endomorphism ring. The nilpotent endomorphisms form a two-sided ideal, and after factoring this out, we get a commutative ring. In particular, any binary module without embedded primes is isomorphic to an ideal in a reduced ring.
\end{abstract}
%\subjclass[2010]{13E05}
%\begin{keyword}
%commutative algebra; ordinal length; endomorphisms
%\end{keyword}

\maketitle
%463013187

% start of the document

%new material

\section{Introduction}
In \cite{SchSemAdd} and \cite{SchOrdLen}, we introduced a new invariant, (\emph{ordinal}) \emph{length}, which is defined  on the class of all  Noetherian modules,  and is a transfinite extension of ordinary length. This length, denoted $\len M$,  measures the longest descending chain of submodules in $M$, and, if $M$ has finite Krull dimension, can be written in Cantor normal form as $\len M=a_1\omega^{d_1}+\dots+a_s\omega^{d_s}$, for some unique  natural numbers $d_1>\dots>d_s$ and  $a_i\neq 0$   (note that neither sum nor multiplication of ordinals is commutative, and so we always need to write these terms in descending order; see \S\ref{s:Ord} below for more details on ordinals). Here $d_1$ turns out to be the dimension of $M$ and we call $d_s$ its \emph{order}. 

Whereas ordinary length is additive on exact sequences, we can no longer  hope for this to be true  in the transfinite case. Instead, we only have semi-additivity (\cite[Theorem 3.1]{SchSemAdd}). Although by definition a combinatorial invariant, we exhibit its homological nature in \cite{SchOrdLen} by linking it to the fundamental cycle $\fcyc M$ of a module $M$ (which in turn is determined by its zero-th local cohomology; see \Thm{T:cohrk} below). In this paper, we study modules whose fundamental cycle is \emph{binary}, meaning that the only coefficients are either zero or one. An example of such a module is any module whose length is a binary ordinal (i.e., all $a_i=1$ in the Cantor normal form). Using the interplay between semi-additivity and the cohomological characterization of a module's length, we are able to give some structure theorems for these binary modules.   

%Nonetheless, there are only finitely many possible lengths for a given submodule, namely, any of the ordinals $b_1\omega^{d_1}+\dots+b_s\omega^{d_s}$ with $0\leq b_i\leq a_i$. In this paper, we study certain modules whose length  has a very special shape, to wit,  for which all $a_i=1$. Such a module will be called \emph{binary}, as the only coefficients in $\len M$ are the binary digits $0$ and $1$. In particular, by the previous observation, any submodule  is again binary. It is this restriction on the possible submodules that allows us to say more about binary modules than about modules of arbitrary length. 
%For instance, the length of the intersection of two submodules is the binary ordinal obtained by adding only the common terms in the lengths of the submodules. 
The degree of the fundamental cycle will  be    called the \emph{valence} $v$ of $M$. We can completely characterize the univalent modules ($v=1$) as the modules that are isomorphic to an ideal  in  a domain.  Our main interest, however, lies in the description of   $\ndo M$, the endomorphism ring   of a binary module $M$. For arbitrary modules,  $\ndo M$, like its subring of all automorphisms, is poorly understood, and in general highly non-commutative. Our main result (\Thm{T:binendo}) says that if $M$ is binary, then the set of nilpotent endomorphisms forms a two-sided ideal and after factoring this out, we get a commutative ring. In particular, we show (\Thm{T:commendo}) that if $M$ is a binary module without  embedded primes, then  there are no nilpotent endomorphisms, and so $\ndo M$ is commutative. We may then invoke a result by Vasconcelos (\cite{VasEndo}) to conclude that   $M$ can be embedded as an ideal of $R/\ann RM$ (thus generalizing the univalent case).

\section{Ordinals}\label{s:Ord}
A partial ordering is called a \emph{partial well-order} if it has the
descending chain condition, that is to say, any descending chain must
eventually be constant. A total order is a well-order  if every non-empty
subset has a minimal element.  
%$x_i\leq x_j$. It can be shown that this is equivalent with the fact that any
%extension of the  order on $P$ to a total order on $P$  has to be   well-founded. 

Recall that an \emph{ordinal} is an equivalence class, up to an order-preserving  isomorphism, of a total well-order.  The class of all ordinals is denoted $\ord$;  any   subset of $\ord$ has     an infimum and a supremum. For generalities on ordinals, see any elementary textbook on set-theory. Let me remind the reader of the fact that ordinal sum is not commutative: $1+\omega\neq\omega+1$ since the former is just $\omega$. We
will adopt the usual notations except that we will write $n\alpha$ for the $n$-fold\footnote{If viewed as ordinal multiplication, this would usually be written $\alpha\cdot n$, but our notation follows the more conventional one for multiples in semi-groups. 
%Thus, \eqref{eq:CNF}  would be written $\omega^d\cdot a_d+\dots+\omega\cdot a_1+a_0$.
}  sum $\alpha+\dots+\alpha$.
%
%for one, where we will reverse the
%order, as it is both more natural and more esthetically pleasing: namely, $\alpha\beta$ will   mean \emph{$\alpha$
%copies of $\beta$}, that is to say, $\alpha\beta$ is equal to the lexicographic
%ordering on $\alpha\times\beta$. After all, $2\omega$ should mean `two omega', that is to say,   $\omega+\omega$.
% The set of ordinals is a transfinite extension of the set of natural numbers $\nat$. It allows us, instead of  usual induction, to perform transfinite induction.
% \footnote{Where, apart from the successor stage, we also have to consider any limit stage, instead of just one `initial' case.} 
 
 We will be concerned only with ordinals of \emph{finite degree}, that is to say,  those strictly below $\omega^\omega$, and henceforth, when we say `ordinal', we tacitly assume it to be of finite degree. Any such ordinal has a unique Cantor normal form
\begin{equation}\label{eq:CNF}
\alpha=a_d\omega^d+\dots+a_1\omega+a_0
\end{equation}
with $a_n\in\nat$. The \emph{support} of $\alpha$, denoted $\op{Supp}(\alpha)$, consists of all $i$ for which $a_i\neq 0$. The maximum and minimum of the  support of $\alpha$ are  called respectively its \emph{degree} and \emph{order}. 
%An ordinal is a successor ordinal \iff\ its order is zero. 
The sum of all $a_i$ is called the \emph{valence} of $\alpha$. The ordering $\leq$ on ordinals corresponds   to the lexicographical ordering on the tuples of `coefficients' $(a_d,\dots,a_0)$. We also need a partial ordering   given by $\alpha \aleq\beta$ \iff\ $a_i\leq b_i$ for all $i$, where, likewise,  the $b_i$ are the coefficients  in the  Cantor normal form of $\beta$. We express this by calling $\alpha$ \emph{weaker than} $\beta$.
%We will express this by saying that $\alpha$ \emph{preceeds} $\beta$. 
Note that the total order $\leq $ extends the partial order $\aleq$. 
Any two ordinals $\alpha$ and $\beta$ have an infimum and a supremum with respect to this partial order $\aleq$, which we denote respectively by $\alpha \en\beta$ and $\alpha\of \beta$. One easily checks that these are the respective ordinals $p_d\omega^d+\dots+p_0$ and $q_d\omega^d+\dots+q_0$, where $p_i$ and $q_i$ are,  for each $i$, the respective minimum and  maximum of $a_i$ and $b_i$. 
%Although technically not correct, we will call two ordinals $\alpha$ and $\beta$ \emph{disjoint} if $\alpha\en\beta=0$, and likewise, two submodules $N,N'\sub M$ \emph{disjoint},  if $N\cap N'=0$. In particular, $\alpha$ and $\beta$ are disjoint \iff\ their supports are.

Apart from the usual ordinal sum, we make use of the \emph{natural} or \emph{shuffle} sum $\alpha\ssum\beta$ given in Cantor normal form as $(a_d+b_d)\omega^d+\dots+a_0+b_0$. Note that the shuffle sum is commutative, and $\alpha+\beta$ will in general be smaller than $\alpha\ssum\beta$. In fact, we showed in \cite[Theorem 7.1]{SchSemAdd} that the shuffle sum is the largest possible ordinal sum one can obtain from writing both ordinals as a sum of smaller ordinals and then rearranging their terms. Now, $\alpha\preceq \beta$ \iff\ there exists $\gamma$ such that $\alpha\ssum\gamma=\beta$. 
%Note that the Cantor normal form is always written in descending form, but using the shuffle sum instead, this is not necessary.
We call $\alpha$ a \emph{binary} ordinal, if all coefficients $a_i$ are equal to $1$ or $0$. 
%Clearly, a binary ordinal is completely determined by its support. Thus, if $\Sigma\sub\nat$ is a finite subset, we let $\binord\Sigma$ be the unique binary ordinal with support equal to $\Sigma$, that is to say, 
%$$
%\binord\Sigma=\Ssum_{i\in\Sigma}\omega^i.
%$$

Given any $e\geq 0$, we will write 
\begin{equation}\label{eq:splitord}
\alpha^+_e:=\sum_{i=e+1}^d a_i\omega\qquad\text{and}\qquad \alpha_e^-:=\sum_{i=0}^{e} a_i\omega^i,
\end{equation} 
where the $a_i$ are given by \eqref{eq:CNF}. Loosely speaking,  of $\alpha$, these are respectively the part of order strictly bigger than $e$, and the part of degree at most $e$.  In particular, $\alpha=\alpha^+_e+\alpha_e^-=\alpha^+_e\ssum\alpha_e^-$. 
% Given two ordinals $\alpha$ and $\beta$, we call $\alpha$ a \emph{truncation} of $\beta$, if $\alpha=\beta^-_e$, for some $e$ (and, in fact, for $e=\op{deg}(\alpha)$).

\section{The length of a module}
All rings will be commutative, Noetherian, of finite Krull dimension, and all modules will be finitely generated. Throughout, if not specified otherwise, $R$ denotes   a (Noetherian) ring and $M$ denotes some (finitely generated)  $R$-module. By Noetherianity, the collection of submodules of $M$ ordered by inverse inclusion is a partial well-order. In particular, any chain in this order is (equivalent to) an ordinal. The supremum of all possible ordinals arising as a chain in this way is called the \emph{length} of $M$ and is denoted $\lenmod RM$ or simply $\len M$. This is not the original definition from \cite{SchSemAdd}, but it is proven in \cite[Theorem 3.10]{SchSemAdd} to be equivalent to it. It follows from the Jordan-H\"older theory that this ordinal length coincides with the usual length for modules of finite length. The \emph{order} of a module is by definition the order of its length, and will be denoted $\order M$; the \emph{valence} $\val M$ is defined to be the valence of $\len M$.
% It is useful to have also a transfinite definition of length: we define a \emph\hdim\ $\hd\cdot$ on the class of submodules of $M$, as follows. Put $\hd M:=0$. Given a submodule $N\sub M$, at a successor stage, we say that $\hd N\geq \alpha+1$, if there exists a submodule $N'\sub M$ containing $N$ such that $\hd {N'}\geq \alpha$. If $\lambda$ is a limit ordinal, then we say that $\hd N\geq\lambda$, provided for each $\alpha<\lambda$, there exists a submodule $N_\alpha\sub M$ containing $N$ with $\hd {N_\alpha}\geq \alpha$. Finally, we say that $\hd N=\alpha$ if $\hd N\geq\alpha$ but not $\hd N\geq \alpha+1$. One now easily checks that   the \hdim\ of the zero module is the length of $M$. In fact, more generally, we have for any submodule $N\sub M$, an equality
% \begin{equation}\label{eq:quot}
% \hd N=\lenmod R {M/N}.
%\end{equation}

 Define the  \emph{finitistic  length}  of  a module $M$ as the supremum $\finlen M$ of all $\len N$ with $N\sub M$ and $\len N<\omega$. By Noetherianity,  $M$ has a largest submodule $H$ of finite length, and hence $\finlen M=\len H$. 
% A note of caution: finitistic length is in general only left exact. 
 Define
the \emph{cohomological rank} of a module $M$ as  
$$
\cohrk M:=\Ssum_{\pr} \finlen{M_\pr}\cdot \omega^{\op{dim}(\pr)}.
$$
Note that this is well-defined since $\finlen{M_\pr}\neq 0$ \iff\ $\pr$ is an associated prime of $M$. In fact, $\finlen{M_\pr}$ is equal to the length of the zero-th local cohomology of $M_\pr$. For our purposes, we reformulate   cohomological rank in terms of the  \emph{fundamental cycle} of $M$, given as  the formal sum 
$$
\fcyc M:=\sum \finlen{M_\pr}[\pr],
$$
 where $\pr$ runs over all (associated) primes of $M$. Recall that a \emph{(Chow) cycle} on $R$ is an element in the free Abelian group on generators $[\pr]$, where $\pr$ runs over all prime ideals in $R$. Hence a cycle is a formal sum $D=\sum a_i[\pr_i]$. We say that $D\aleq E$, if $a_i\leq b_i$ for all $i$, for   $E=\sum_ib_i[\pr_i]$. We call a cycle $D$ effective if $0\aleq D$. To the effective cycle $D$, we now associate an ordinal (of finite degree)  
 $$
 \binord D:=\Ssum_ia_i\omega^{\op{dim}(\pr_i)}.
$$
In particular, $\cohrk M=\binord{\fcyc M}$. We can now formulate the two main theorems on ordinal length:  the first, from   \cite{SchSemAdd}, is combinatorial in nature; the second, from \cite{SchOrdLen}, is homological. 

\begin{theorem}[Semi-additivity]\label{T:semadd}
For any exact sequence  $\Exactseq NMQ$, we have inequalities
\begin{equation}\label{eq:lensemadd}
  \len  Q+\len N\leq \len  M\leq \len  Q\ssum \len  N
\end{equation}
Moreover, if the sequence is split, then the last inequality is an equality.\qed
\end{theorem}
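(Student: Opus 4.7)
The plan is to treat the two inequalities separately, with the lower bound coming from a concatenation argument and the upper bound from a step-marking procedure combined with the shuffle-sum maximality cited from \cite[Theorem 7.1]{SchSemAdd}; the split case will then follow by exhibiting an explicit chain realizing the upper bound.

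For the lower bound, I would pick descending chains of maximal order type in each of $N$ and $Q$---realized by Noetherianity---and write $\pi\colon M\onto Q$ for the quotient map. The preimages $\pi^{-1}(Q_i)$ of the chain in $Q$ form a strictly descending chain in $M$ of the same order type $\len Q$, ending at $\pi^{-1}(0)=N$. Concatenating with the chosen chain in $N$ yields a descending chain in $M$ of order type $\len Q+\len N$, from which the first inequality follows.

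For the upper bound, consider any descending chain $M=M_0\supsetneq M_1\supsetneq\cdots$ in $M$ of order type $\gamma$. At each step, at least one of $\pi(M_{i+1})\subsetneq\pi(M_i)$ or $M_{i+1}\cap N\subsetneq M_i\cap N$ must be strict: otherwise any $x\in M_i\setminus M_{i+1}$ could be corrected, modulo an element of $M_{i+1}$ with the same $Q$-image, to an element of $M_i\cap N=M_{i+1}\cap N\subseteq M_{i+1}$, contradicting the choice of $x$. I would mark each step as a \emph{$Q$-step} or an \emph{$N$-step} accordingly. The $Q$-steps, read off in $Q$, form an honest strictly descending chain of order type $\beta\leq\len Q$, and the $N$-steps one in $N$ of order type $\alpha\leq\len N$. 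Decomposing the chain in $M$ into its maximal monochromatic runs yields expressions of $\alpha$ and $\beta$ as sums of smaller ordinals (the run lengths within each type) whose rearrangement---in the order the runs appear---has total order type $\gamma$; invoking \cite[Theorem 7.1]{SchSemAdd} now bounds $\gamma\leq\alpha\ssum\beta\leq\len N\ssum\len Q$.

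For the split case $M=N\ssum Q$, the shuffle-sum bound is realized by an explicit interleaving: given maximal chains in $N$ and $Q$ with lengths in Cantor normal form, one constructs a chain of the form $N_i\ssum Q_j$ whose transfinite descent alternates between shrinking the $N$-coordinate and the $Q$-coordinate degree by degree, so that the Cantor coefficients of the two lengths add at each level rather than one dominating the other, taking advantage of the independence of the two summands. The main obstacle throughout is the bookkeeping in the upper bound: one has to verify that the marked sub-chains in $Q$ and $N$ are genuinely strictly descending (no spurious repetitions within a type), that the run decomposition is well-defined transfinitely (maximal monochromatic intervals of $\gamma$ really do partition it into an ordinal-indexed list of ordinal pieces), and that the resulting presentation of $\gamma$ falls within the scope of the shuffle-sum maximality theorem.
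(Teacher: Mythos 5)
The paper states this theorem with a \qed\ and cites it from \cite{SchSemAdd}; it contains no proof of its own, so there is no in-text argument to compare against. Your route via the chain characterisation of $\len M$ is the natural one, and the lower-bound concatenation idea, the observation that $(\pi(M_\xi),M_\xi\cap N)$ determines $M_\xi$, and the appeal to shuffle-sum maximality are all the right ingredients.

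Two steps need genuine repair rather than mere bookkeeping. First, you "pick chains of maximal order type, realized by Noetherianity." That the supremum defining $\len N$ is actually \emph{attained} by a single chain is a nontrivial lemma (established in \cite{SchSemAdd}), not a formal consequence of the chain condition; and if attainment failed, the lower bound would too, since ordinal addition is not continuous in its left argument---$\sup_{\alpha<\len Q}(\alpha+\len N)$ can be strictly less than $\len Q+\len N$. Second, the step-marking and maximal-run decomposition for the upper bound is phrased entirely in terms of successor transitions $M_i\supsetneq M_{i+1}$; once $\gamma$ is transfinite the chain has limit positions $M_\lambda$ that are not reached from any predecessor by a single step, so "maximal monochromatic runs" do not obviously partition the index ordinal, nor do the run-lengths visibly sum to $\alpha$ and $\beta$. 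The cleaner statement your injectivity observation suggests is that $\xi\mapsto(\pi(M_\xi),M_\xi\cap N)$ is an order-preserving injection of the chain into the product of a chain in $Q$ and a chain in $N$, reducing the upper bound to a purely ordinal fact about chains in products; whether that is exactly \cite[Theorem 7.1]{SchSemAdd} or requires a further reduction to the "rearranged sum" formulation must be checked. Finally, the split case as sketched needs to be carried out against explicit realising chains (which exist by \Thm{T:submod} and attainment), verifying that the proposed interleaving really produces a well-ordered chain in $N\oplus Q$ of order type exactly $\len N\ssum\len Q$ rather than merely $\len N+\len Q$.
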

%
%
%\begin{theorem}[Dimension]\label{T:dim}
%Let $M$ be a   finitely generated module over a finite-dimensional Noetherian ring $R$. Then the degree of $\len M$ is equal to the dimension of $M$. In particular,   $R$ is a $d$-dimensional domain \iff\ $\len R=\omega^d$.\qed
%\end{theorem}
%
%
%The main result from \cite{SchOrdLen} calculates the length in terms of other (homological) invariants of the module. To formulate it, let us define the  \emph{finitistic  length}  of  a module $M$ as the supremum $\finlen M$ of all $\len N$ with $N\sub M$ and $\len N<\omega$. By Noetherianity,  $M$ has a largest submodule $H$ of finite length, and hence $\finlen M=\len H$. 
%We now define
%the \emph{cohomological rank} of a module $M$ as  
%$$
%\cohrk M:=\Ssum_{\pr} \finlen{M_\pr}\omega^{\op{dim}(\pr)}.
%$$
%Note that this is well-defined since $\finlen{M_\pr}\neq 0$ \iff\ $\pr$ is an associated prime of $M$.  The following two results from \cite{SchOrdLen} will be important for the present:

\begin{theorem}[Cohomological rank]\label{T:cohrk}
Length equals cohomological rank, that is to say,  $\len M=\cohrk M$. \qed
\end{theorem}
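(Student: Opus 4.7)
The plan is to proceed by induction on $d := \dim M$. When $d = 0$, $M$ has finite length, all associated primes are maximal, and both sides reduce to the classical length via localization and Jordan--H\"older.

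For the inductive step I would first establish the building block $\len{R/\pr} = \omega^d$ whenever $\dim(R/\pr) = d > 0$. For the lower bound, pick $x \in R$ whose image is a non-unit non-zerodivisor in $R/\pr$ (possible since $R/\pr$ is a positive-dimensional domain); the chain $R/\pr \supsetneq x(R/\pr) \supsetneq x^2(R/\pr) \supsetneq \dots$ has successive factors isomorphic to $(R/\pr)/x(R/\pr)$, a module of dimension $d-1$ and, by induction, of length at least $\omega^{d-1}$. Refining the chain by inserting such sub-chains inside each factor yields ordinal type at least $\omega^{d-1}\cdot\omega = \omega^d$. The matching upper bound $\len{R/\pr} \leq \omega^d$ is the technical core: it requires a direct combinatorial analysis of descending chains in a Noetherian domain and does not follow from semi-additivity alone.

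For general $M$ of dimension $d$, let $N \sub M$ be the unique maximal submodule of dimension $< d$ (the sum of all such submodules, itself of dimension $<d$ by Noetherianity). Then $M/N$ is equidimensional of dimension $d$, sharing precisely the top-dimensional associated primes of $M$, and $N_\pr = 0$ for each $\pr$ with $\dim(R/\pr) = d$, so that $\finlen{(M/N)_\pr} = \finlen{M_\pr}$ and $\cohrk{M/N} = r\omega^d$ with $r := \sum_{\dim(R/\pr) = d} \finlen{M_\pr}$. Meanwhile $\cohrk N$ captures the remaining lower-dimensional contributions, and by induction $\len N = \cohrk N$.

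It remains to show $\len{M/N} = r\omega^d$. Iterating \Thm{T:semadd} over a prime filtration of $M/N$ pins down the $\omega^d$-coefficient of $\len{M/N}$ as exactly $r$, but leaves possible lower-order terms undetermined, since prime filtrations need not be pure in dimension; ruling out any lower-order contribution appears to require invoking the characterization $\finlen{M_\pr} = \lenmod{R_\pr}{H^0_{\pr R_\pr}(M_\pr)}$ cohomologically. With this in hand, a final application of semi-additivity to $0 \to N \to M \to M/N \to 0$, using that $\len N$ has degree $<d$ so that $r\omega^d + \len N = r\omega^d \ssum \len N$, yields $\len M = r\omega^d + \cohrk N = \cohrk M$. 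The main obstacles are therefore the upper bound $\len{R/\pr} \leq \omega^d$ and the absence of lower-order terms in $\len{M/N}$ --- both of which lie genuinely beyond what semi-additivity alone can extract.
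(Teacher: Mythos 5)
This theorem is stated with a terminal $\qed$ and is cited from the companion paper \cite{SchOrdLen}; the present paper supplies no proof for comparison. Judged on its own, your outline has the right overall shape --- induction on dimension, the building block $\len{R/\pr}=\omega^d$, peeling off $N=\fl D_{d-1}(M)$ and treating the equidimensional quotient --- but the two steps you yourself single out as the crux are left unresolved, so this is a strategy, not a proof.

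Concretely: the upper bound $\len{R/\pr}\leq\omega^d$ cannot be borrowed from anything in this paper without circularity, since \Cor{C:dim}, which would hand you $\len R=\omega^d$ for a $d$-dimensional domain, is itself deduced from the theorem you are trying to prove. Your assertion that ``iterating \Thm{T:semadd} over a prime filtration pins down the $\omega^d$-coefficient of $\len{M/N}$ as exactly $r$'' also glosses over a real difficulty: the lower bound in semi-additivity is an ordinal sum, which is non-commutative and suffers absorption ($\omega^a+\omega^b=\omega^b$ when $a<b$), so the bound you extract depends on the order in which the dimensions appear along the filtration; you would need to produce a filtration whose factor dimensions decrease weakly from top to bottom, or route through primary decomposition into coprimary pieces. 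Most seriously, the claim that $\len{M/N}$ carries \emph{no} lower-order terms at all --- not merely that its leading coefficient is $r$ --- is the homological heart of the theorem, and in your write-up it is disposed of by the phrase ``with this in hand.'' Knowing that $\finlen{M_\pr}$ equals the length of $H^0_{\pr R_\pr}(M_\pr)$ identifies the coefficients of $\cohrk M$, but it does not by itself bound the order type of an arbitrary descending chain in $M/N$. At both junctures the proposal assumes what it must establish.
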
 

%\begin{theorem}[Dimension]\label{T:dim}
\begin{corollary}\label{C:dim}
The degree of $\len M$ is equal to the dimension of $M$; its order $\order M$ is equal to the minimal dimension of an associated prime, and its valence $\val M$ is equal to the degree of its fundamental cycle. In particular,   $R$ is a $d$-dimensional domain \iff\ $\len R=\omega^d$. 
\end{corollary}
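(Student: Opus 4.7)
The plan is to deduce all three equalities directly from \Thm{T:cohrk}, which identifies $\len M$ with the ordinal
$\binord{\fcyc M}=\Ssum_{\pr\in\op{Ass}(M)}\finlen{M_\pr}\omega^{\op{dim}(\pr)}$.
The first step is to rewrite this natural (shuffle) sum in Cantor normal form by grouping together, for each $i\geq 0$, the contributions of associated primes of dimension $i$. That yields $\len M=\sum_{i\geq 0} a_i\omega^i$ with $a_i:=\sum_{\pr\in\op{Ass}(M),\,\op{dim}(\pr)=i}\finlen{M_\pr}$, after which all three statements become readings-off.

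Specifically, the degree of $\len M$ is the largest $i$ with $a_i\neq 0$, i.e.\ the maximum dimension of an associated prime; this equals $\op{dim}M$ by the standard Noetherian formula $\op{dim}M=\max\{\op{dim}(\pr):\pr\in\op{Ass}(M)\}$. The order of $\len M$ is likewise the minimum such $i$, and the valence $\val M=\sum_i a_i=\sum_\pr \finlen{M_\pr}$ is precisely the sum of the coefficients of $\fcyc M$, i.e.\ its degree.

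For the ``in particular'' statement, one direction is immediate: if $R$ is a $d$-dimensional domain then $\op{Ass}(R)=\{(0)\}$ with $R_{(0)}$ a field, so $\fcyc R=[(0)]$ and $\len R=\omega^d$. For the converse, if $\len R=\omega^d$ then the valence is $1$ and the order is $d$, so $\fcyc R$ consists of a single term $[\pr]$ with $\finlen{R_\pr}=1$ and $\op{dim}(\pr)=d$; in particular $\pr$ is both the unique associated prime of $R$ and the unique minimal prime. To conclude $\pr=0$, I would apply semi-additivity (\Thm{T:semadd}) to $\Exactseq\pr R{R/\pr}$: since $R/\pr$ is a $d$-dimensional domain, the forward direction gives $\len(R/\pr)=\omega^d$, and then $\omega^d+\len\pr\leq\len R=\omega^d$ forces $\len\pr=0$, hence $\pr=0$.

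The only step beyond routine transcription is this final reduction from ``unique associated prime of top dimension with local length $1$'' to ``$R$ is a domain''; the semi-additivity bound handles it cleanly by bootstrapping off the forward direction.
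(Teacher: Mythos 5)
Your proposal is correct and follows the same overall strategy as the paper: the first three assertions are read off from \Thm{T:cohrk}, the forward direction of the domain characterization is immediate, and the converse is handled via semi-additivity. The only cosmetic difference is in the last step: the paper notes that any non-zero ideal $I$ forces $\len{R/I}<\omega^d$ and hence $\op{dim}(R/I)<d$, leaving ``a moment's reflection'' (namely, applying this to the unique minimal prime $\pr$, which has $\op{dim}(R/\pr)=d$) to finish; you instead go directly through $\Exactseq{\pr}R{R/\pr}$ and the bound $\omega^d+\len\pr\leq\omega^d$, which is a more explicit way of packaging the same application of \Thm{T:semadd}.
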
 
%\end{theorem}
\begin{proof}
The first assertions are  immediate from \Thm{T:cohrk}. For the last assertion, one direction is also immediate, so assume $\len R=\omega^d$. If $I$ is a non-zero ideal, then $\len{R/I}<\omega^d$, and hence  $\op{dim}(R/I)<d$ by the first assertion. A moment's reflection then yields that $R$ must be a domain.
\end{proof}

%The \emph{order} of a module is by definition the order of its length, and will be denoted $\order M$; 
%It follows that the order of $M$ is the smallest dimension of an associated prime.
 The first assertion in the next result is immediate from \Thm{T:cohrk} and the left exactness of finitisitic length; the second is proven in \cite{SchOrdLen} using both main theorems.

\begin{theorem}\label{T:submod}
If $N\sub M$, then $\len N$ is weaker than $\len M$. Conversely, if $\nu\aleq\len M$, then there exists a submodule $N\sub M$ of length $\nu$.\qed
\end{theorem}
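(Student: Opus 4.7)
The first assertion is nearly immediate. For $N\sub M$ and any prime $\pr$, we have $N_\pr\sub M_\pr$; since the finitistic length is the length of the zero-th local cohomology, which is a left-exact functor, $\finlen{N_\pr}\leq\finlen{M_\pr}$. Regrouping by dimension gives $\fcyc N\aleq\fcyc M$, and \Thm{T:cohrk} translates this into $\len N\aleq\len M$.

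For the converse, I would use transfinite induction on $\nu$, with base case $\nu=0$ settled by $N=0$. For the inductive step, let $e:=\order\nu$, so the $\omega^e$-coefficient $b_e$ of $\nu$ is positive while all lower coefficients vanish. Since $\nu\aleq\len M$, the $\omega^e$-coefficient $a_e$ of $\len M$ is also positive, and by \Thm{T:cohrk} there exists an associated prime $\pr$ of $M$ with $\dim\pr=e$. Choose $x\in M$ with $\ann Rx=\pr$; by \Cor{C:dim}, $Rx\iso R/\pr$ has length $\omega^e$. Let $\nu'$ be the ordinal obtained from $\nu$ by reducing $b_e$ by $1$, so that $\nu'<\nu$.

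The crux is to verify $\nu'\aleq\len{M/Rx}$, after which the induction hypothesis will deliver a submodule inside $M/Rx$ of the right length. Write $\len{M/Rx}=\sum c_j\omega^j$. The upper semi-additivity bound $\len M\leq\len{M/Rx}\ssum\omega^e$ of \Thm{T:semadd} gives $c_j\geq a_j$ for $j\neq e$ and $c_e\geq a_e-1$. The complementary bound $\len{M/Rx}+\omega^e\leq\len M$ simplifies, because ordinary addition of $\omega^e$ absorbs all coefficients strictly below $e$, to $c_d\omega^d+\cdots+c_{e+1}\omega^{e+1}+(c_e+1)\omega^e\leq\len M$ in lex order; combined with the previous inequalities, this pins down $c_j=a_j$ for $j>e$ and $c_e=a_e-1$. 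Comparing coefficient-wise with $\nu'$ (whose $\omega^j$-coefficients are $b_j\leq a_j=c_j$ for $j>e$, $b_e-1\leq a_e-1=c_e$ at $j=e$, and $0\leq c_j$ for $j<e$) yields $\nu'\aleq\len{M/Rx}$.

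By induction, there is $N'\sub M/Rx$ with $\len{N'}=\nu'$; let $N\sub M$ be its preimage, so $0\to Rx\to N\to N'\to 0$ is exact. Semi-additivity now gives $\nu'+\omega^e\leq\len N\leq\nu'\ssum\omega^e$. Since $\nu'$ has no coefficients below $\omega^e$, the ordinary sum absorbs nothing and both extremes equal $\nu$, forcing $\len N=\nu$ and closing the induction. The delicate point is that $M/Rx$ may acquire new associated primes of low dimension, inflating the $c_j$ for $j<e$; the proof sidesteps this by choosing $e=\order\nu$, precisely where $\nu'$ places no demand on those coefficients.
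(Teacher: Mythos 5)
Your proof of the first assertion is exactly the argument the paper indicates (left exactness of the finitistic length functor applied to $N_\pr\sub M_\pr$, then translate via \Thm{T:cohrk}); for the second assertion the paper gives no proof at all, merely citing \cite{SchOrdLen} and remarking that it uses both main theorems, so there is no in-text argument to compare against. Your transfinite induction --- choosing $\pr$ with $\dim\pr=\order\nu$, peeling off $Rx\iso R/\pr$, descending to $M/Rx$ --- is a sound self-contained route that does indeed lean on both main theorems, and the observation at the end about why $e=\order\nu$ is the right choice (new low-dimensional associated primes of $M/Rx$ are harmless because $\nu'$ makes no demand there) is the genuine crux.

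One inference, however, is stated incorrectly and needs repair. You write that the upper bound $\len M\leq\len{M/Rx}\ssum\omega^e$ \emph{gives} $c_j\geq a_j$ for $j\neq e$ and $c_e\geq a_e-1$. That would be true if \Thm{T:semadd} produced the coefficientwise order $\aleq$, but it produces the lexicographic order $\leq$, and a lex inequality alone does not bound coefficients termwise (e.g.\ $\omega^2+\omega\leq 2\omega^2\ssum\omega$ lexicographically, yet $1\not\leq 0$ at the $\omega$-slot). The correct way to pin down $c_j=a_j$ for $j>e$ and $c_e=a_e-1$ is to use both bounds \emph{together}: set $\gamma:=\len{M/Rx}+\omega^e$ and $\delta:=\len{M/Rx}\ssum\omega^e$; these two ordinals agree at every coefficient of index $\geq e$, so if $\len M$ differed from them at some largest index $j_0\geq e$ then either $\len M<\gamma$ or $\len M>\delta$ lexicographically, contradicting $\gamma\leq\len M\leq\delta$. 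With that fix the rest of your argument (the comparison $\nu'\aleq\len{M/Rx}$, the preimage $N$, and $\nu'+\omega^e=\nu'\ssum\omega^e=\nu$) goes through cleanly.
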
 

%It follows that the finitisitic length $\finlen M$ is equal to the constant term in $\len M$. 
A special case is when there is equality: we call a submodule $N\sub M$ \emph{open}, if $\len N=\len M$. In \cite{SchOrdLen}, we show that by taking as an open basis the open submodules and their co-sets, we get a topology on $M$, called the \emph{canonical topology}.
%  (we will prove this result in the special case of interest to us, namely for binary modules, in \Cor{C:intopen} below). 
  Any morphism $M\to N$  is continuous with respect to the respective canonical topologies. For instance, if $(R,\maxim)$ is local and has positive depth, then its canonical topology refines the $\maxim$-adic topology. Recall that a submodule $N\sub M$ is called \emph{essential} (or \emph{large}), if it intersects any non-zero submodule non-trivially.

\begin{corollary}\label{C:bigbg}
An open submodule is essential. In particular, any embedding of two submodules of the same length is essential. 
\end{corollary}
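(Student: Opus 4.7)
The plan is to argue the first assertion by contradiction. Suppose $N \sub M$ is open but fails to be essential, so that some non-zero submodule $K \sub M$ satisfies $N \cap K = 0$. Then the internal sum realizes $N \ssum K$ as a submodule of $M$, and \Thm{T:submod} gives $\len{N \ssum K} \aleq \len M$.

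Next I would compute the same length from the other side. Since $\Exactseq N{N \ssum K}K$ is split, the equality clause of \Thm{T:semadd} yields $\len{N \ssum K} = \len K \ssum \len N$. Combining this with openness ($\len N = \len M$) produces $\len K \ssum \len M \aleq \len M$. Because shuffle sum is coefficient-wise addition in Cantor normal form, this inequality forces every coefficient of $\len K$ to vanish; hence $\len K = 0$ and so $K = 0$, a contradiction.

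For the ``in particular'' clause, if $f : N_1 \into N_2$ is an embedding of two submodules of $M$ with $\len{N_1} = \len{N_2}$, then $f(N_1) \sub N_2$ is isomorphic to $N_1$ and therefore has length $\len{N_2}$; that is, $f(N_1)$ is open in $N_2$, and the first assertion applies to make it essential there.

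The argument is short and has no real obstacle, but the one subtlety worth flagging is that we must use the \emph{shuffle} sum and not the ordinary ordinal sum: the collapse $\alpha \ssum \len M \aleq \len M \Rightarrow \alpha = 0$ would fail for $+$ (since for instance $1 + \omega = \omega$), so it is essential that splitness of the sequence lets us invoke the equality half of \Thm{T:semadd} rather than the weaker inequality $\len K + \len N \leq \len M$.
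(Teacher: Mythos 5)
Your argument is correct and coincides with the paper's own proof: both take a non-zero $K$ with $N\cap K=0$, observe that $N\oplus K$ embeds in $M$, apply the split-case equality of semi-additivity to get $\len N\ssum\len K$ for its length, and conclude from $\len N=\len M$ that $\len K$ must vanish. Your rendering is just a bit more explicit about using $\aleq$ (via \Thm{T:submod}) rather than $\leq$, and about the derivation of the ``in particular'' clause, but the underlying decomposition, the key lemma, and the final coefficient-wise collapse are all the same.
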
 
\begin{proof}
Let $N\sub M$   have the same length and let $H$ be an arbitrary  submodule. Suppose $H\cap N=0$, so that $H\oplus N$ embeds  as a submodule of $M$. In particular, $\len H\ssum\len N\leq \len M$ by semi-additivity (\Thm{T:semadd}), forcing $\len H$, whence $H$, to be zero.
\end{proof}

\section{Binary modules}
The main topic of this paper is the study of a class of modules characterized by the particular shape of their fundamental cycle: we call a module \emph{binary}, if its fundamental cycle $\fcyc M$ is    a sum of distinct prime ideals. More precisely, given a finite set $S$ of primes, let 
$$
\suppcyc S:= \sum_{\pr\in S}\ [\pr].
$$
This is a binary cycle and any binary cycle is obtained this way (we call $S$ its \emph{support}). So, $M$ is binary \iff\ $\fcyc M=\suppcyc{\op{Ass}(M)}$, and, by \Thm{T:cohrk}, its length is  given by the formula
 \begin{equation}\label{eq:lensub}
\len M=\Ssum_{\pr\in\op{Ass}(M)}\omega^{\op{dim}(\pr)}.
\end{equation} 
  In particular, the valence  of a binary module is equal to the number of its associated primes.  
%  By \Thm{T:submod}, there are then $2^v$ different possibilities for the length of a submodule, and every possibility occurs. 
An Artinian module is binary \iff\ it is simple. 
Reduced rings are examples of binary rings, but also non-reduced rings can be binary, as for instance the ring in \Rem{R:strictineq} below. Any direct sum $R/\pr_1\oplus \dots\oplus R/\pr_s$, with all $\pr_i$ different, is a binary module as its fundamental cycle is $[\pr_1]+\dots+[\pr_s]$, called a \emph{split binary module}. Not all binary modules are split:

\begin{example}\label{E:binunm}
Any module whose length is a binary ordinal is an example of a binary module. Such a module has the additional property that any two associated primes have different dimension. Here is an example of a binary module with non-binary ordinal length: let $\pr$ and $\mathfrak q$ be two distinct prime ideals of the same dimension, $d$, say, and let $M$ be given by an exact sequence
\begin{equation}\label{eq:bival}
\Exactseq{R/\pr}M{R/\mathfrak q}.
\end{equation} 
With $\mu=:\len M$,   using the last assertion in \Cor{C:dim}, semi-additivity yields $\omega^d+\omega^d\leq\mu\leq\omega^d\ssum\omega^d$, showing that $\mu=2\omega^d$, and so its fundamental cycle must have degree two. Clearly, $\pr$ is an associated prime of $M$. On the other hand, localizing the above exact sequence at $\mathfrak q$ shows that $M_{\mathfrak q}$ is equal to the residue field of $\mathfrak q$, and so its finitisitic length is one. In particular, $\fcyc M$  must contain the subcycle $[\pr]+[\mathfrak q]$, and since the fundamental cycle has  degree two, it must be equal to the latter. In contrast, if we let $\pr=\mathfrak q$ in \eqref{eq:bival}, then the same argument shows that $\len M=2\omega^d$, but this is not a binary module since its fundamental cycle  is now $2[\pr]$. 
\end{example}

 The next two corrolaries are immediate  from respectively \Thm{T:submod} and \Thm{T:cohrk}.

\begin{corollary}\label{C:subbin}
Any submodule of a binary module is again binary.\qed
\end{corollary}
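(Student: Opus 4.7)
The plan is to show directly that the fundamental cycle $\fcyc N$ has only $0$/$1$ coefficients, which by our definition is exactly what it means for $N$ to be binary. So at each prime $\pr$ I need to bound the coefficient of $[\pr]$ in $\fcyc N$, namely $\finlen{N_\pr}$, by $1$.

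The key observation is the monotonicity
$$
\finlen{N_\pr}\leq \finlen{M_\pr}\qquad\text{for every prime }\pr.
$$
To see this, I would first use exactness of localization to embed $N_\pr$ into $M_\pr$. Then I would note that the largest submodule of finite length of $N_\pr$ is in particular a finite-length submodule of $M_\pr$, hence contained in the largest such submodule of $M_\pr$. Passing to ordinary (Jordan--H\"older) lengths on both sides yields the displayed inequality.

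Combining this with \Thm{T:cohrk} finishes the argument: the coefficient of $[\pr]$ in $\fcyc N$ is $\finlen{N_\pr}\leq\finlen{M_\pr}\leq 1$, where the last inequality holds because $M$ is binary. Thus $\fcyc N=\suppcyc{\op{Ass}(N)}$, so $N$ is binary. I do not expect any real obstacle here: the argument is a purely formal monotonicity of finitistic length under inclusions, combined with the cohomological characterization of the fundamental cycle. The only sanity check is that primes $\pr$ with $\finlen{N_\pr}=0$ automatically drop out of $\fcyc N$, which is consistent with such $\pr$ not being associated to $N$.
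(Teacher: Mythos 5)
Your argument is correct, and it is essentially the argument the paper intends. The paper disposes of the corollary as ``immediate from \Thm{T:submod}'', but that theorem, taken literally, only yields $\len N\aleq\len M$, and being binary is a property of the fundamental cycle, not of the ordinal length: if $M$ has several associated primes of the same dimension $d$, the coefficient of $\omega^d$ in $\len M$ can be $\geq 2$, so the weaker inequality on lengths would not by itself rule out a coefficient $\geq 2$ somewhere in $\fcyc N$. What actually does the work is exactly what you wrote: localize at each $\pr$, use that localization is exact so $N_\pr\into M_\pr$, and use monotonicity (left exactness) of finitistic length to get $\finlen{N_\pr}\leq\finlen{M_\pr}\leq 1$. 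This is the same monotonicity the paper invokes to establish the first half of \Thm{T:submod}, so you have in effect written out the proof the author leaves to the reader, and at the right level of generality (cycles rather than lengths). One small remark: \Thm{T:cohrk} is not really needed here; the coefficient of $[\pr]$ in $\fcyc N$ is $\finlen{N_\pr}$ by the very definition of the fundamental cycle, so what you are actually invoking is just that definition, i.e., \Cor{C:bin}.
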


\begin{corollary}\label{C:bin}
A module $M$ is binary \iff\    the finitistic length of any localization $M_\pr$ is either zero or one. \qed
\end{corollary}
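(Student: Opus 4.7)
The plan is to unwind definitions: by construction of the fundamental cycle,
$$
\fcyc M = \sum_{\pr} \finlen{M_\pr}\,[\pr],
$$
where $\pr$ ranges over $\op{Ass}(M)$ (and implicitly over all primes of $R$, with zero coefficient elsewhere, since $\finlen{M_\pr}\neq 0$ precisely when $\pr$ is associated). The definition given in the paper says that $M$ is binary exactly when $\fcyc M$ is of the form $\suppcyc S$ for a finite set $S$ of primes, i.e.\ when every coefficient of $\fcyc M$ lies in $\{0,1\}$. So the statement is essentially a restatement of the definition once we identify the coefficients of $\fcyc M$ with the numbers $\finlen{M_\pr}$.

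Concretely, I would argue both directions in one line. If $M$ is binary, then for any prime $\pr$ of $R$ the coefficient of $[\pr]$ in $\fcyc M$, which equals $\finlen{M_\pr}$, is either $0$ or $1$. Conversely, if $\finlen{M_\pr}\in\{0,1\}$ for every prime $\pr$, then setting $S=\op{Ass}(M)=\{\pr : \finlen{M_\pr}=1\}$ yields $\fcyc M=\suppcyc S$, so $M$ is binary. The only small observation required is that $\finlen{M_\pr}=0$ whenever $\pr\notin\op{Ass}(M)$, which is immediate from the identification of $\finlen{M_\pr}$ with the length of the zero-th local cohomology of $M_\pr$ at $\pr R_\pr$.

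There is no real obstacle here, which is why the paper labels the corollary as ``immediate''. The reference to \Thm{T:cohrk} is needed only insofar as it is the theorem that guarantees the fundamental cycle correctly encodes the length-theoretic data; once one has the formula $\fcyc M=\sum \finlen{M_\pr}[\pr]$, the corollary reduces to reading off coefficients. Thus the proof will be a single short sentence appealing to the definition of binary and to the explicit formula for $\fcyc M$.
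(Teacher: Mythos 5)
Your proof is correct and is essentially the intended one: both directions amount to reading off the coefficients $\finlen{M_\pr}$ of the fundamental cycle and comparing against the definition of a binary cycle. You also correctly observe that the paper's attribution to \Thm{T:cohrk} is slightly misleading --- the equivalence follows directly from the definitions of $\fcyc M$ and of ``binary,'' with no need to pass through the identity $\len M = \cohrk M$; the only substantive fact used is the remark, already made when defining $\fcyc M$, that $\finlen{M_\pr} \neq 0$ exactly when $\pr \in \op{Ass}(M)$.
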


%Note that $M_\pr$ has positive finitistic length \iff\ $\pr$ is an associated prime of $M$. 
%We define the \emph{valence} $\op{val}(M)$ as the valence of $\len M$. 

Since the associated primes of $M_\pr$ are those associated primes of $M$ contained in $\pr$, \Cor{C:bin} gives: 

\begin{corollary}\label{C:locbin}
Any localization of a binary module is again binary (over the localization of the ring).\qed
\end{corollary}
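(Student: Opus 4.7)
The plan is to reduce everything to \Cor{C:bin}, which characterizes binarity by the finitistic lengths of localizations. Let $S$ be a multiplicative subset of $R$ and set $T:=S^{-1}R$, $N:=S^{-1}M$. I want to show $N$ is binary as a $T$-module.

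By \Cor{C:bin} (applied to $N$ over $T$), it suffices to prove that for every prime $\mathfrak q$ of $T$, the finitistic length $\finlen{N_{\mathfrak q}}$ is either $0$ or $1$. Now the primes of $T=S^{-1}R$ correspond bijectively, via extension and contraction, to the primes $\pr$ of $R$ with $\pr\cap S=\emptyset$; explicitly, $\mathfrak q=S^{-1}\pr$. Under this correspondence there is a canonical isomorphism
$$
N_{\mathfrak q}=(S^{-1}M)_{S^{-1}\pr}\iso M_\pr
$$
of modules over $T_{\mathfrak q}=R_\pr$, because the two localizations invert the same set of elements up to units. Since $M$ is binary, \Cor{C:bin} applied to $M$ over $R$ tells us that $\finlen{M_\pr}\in\{0,1\}$, so the same holds for $\finlen{N_{\mathfrak q}}$, and we are done.

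There is essentially no obstacle here: the statement is a direct consequence of \Cor{C:bin} together with the standard identification of iterated localizations. One could alternatively observe that $\op{Ass}_T(N)$ consists of those $S^{-1}\pr$ with $\pr\in\op{Ass}_R(M)$ and $\pr\cap S=\emptyset$, and that the corresponding finitistic lengths match, so $\fcyc N$ is still a $0/1$-cycle; but routing through \Cor{C:bin} keeps the argument to a couple of lines.
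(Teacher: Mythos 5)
Your proof is correct and matches the paper's reasoning: the paper also derives this as an immediate consequence of \Cor{C:bin}, remarking that the associated primes (and finitistic lengths) of a localization are controlled by those of $M$ at the corresponding primes of $R$. Your write-up just makes explicit the iterated-localization isomorphism $(S^{-1}M)_{S^{-1}\pr}\iso M_\pr$ that the paper leaves tacit.
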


I do not know whether the cycle version of \Thm{T:submod} holds (i.e., if $D\aleq\fcyc M$, can we find a submodule $N$ with $\fcyc N=D$?), but here is a special case, which also shows the ubiquity of binary modules:

\begin{proposition}\label{P:subsplit}
Any module $M$ contains a split binary submodule   with the same associated primes as $M$.
\end{proposition}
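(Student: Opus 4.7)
The plan is to extract a split binary submodule from an irredundant primary decomposition of $0$ inside $M$. Such a decomposition naturally produces, for each associated prime $\pr_i$, a non-zero submodule that only sees $\pr_i$, and the pieces corresponding to different primes have pairwise trivial intersection, which is exactly what is needed to get a direct sum inside $M$.

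First I would fix an irredundant primary decomposition $0=Q_1\cap\cdots\cap Q_s$ in $M$, where $Q_i$ is $\pr_i$-primary and $\op{Ass}(M)=\{\pr_1,\dots,\pr_s\}$, and set $N_i:=\bigcap_{j\neq i}Q_j$. By irredundancy, $N_i\neq 0$. Since $N_i\cap Q_i=\bigcap_k Q_k=0$, the composition $N_i\into M\onto M/Q_i$ is injective, so $\op{Ass}(N_i)\sub\op{Ass}(M/Q_i)=\{\pr_i\}$, and equality holds because $N_i$ is non-zero.

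Next I would check that the sum $N_1+\cdots+N_s$ is direct. For any $j\neq i$, the submodule $N_j$ is contained in $Q_i$, since $Q_i$ appears in the intersection defining $N_j$. Hence $\sum_{j\neq i}N_j\sub Q_i$, and therefore $N_i\cap\sum_{j\neq i}N_j\sub N_i\cap Q_i=0$, which is exactly the criterion for the sum to be direct.

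Finally, inside each $N_i$ I would pick an element $x_i$ with $\ann R{x_i}=\pr_i$, which is possible since $\pr_i$ is the unique associated prime of $N_i$. Then $Rx_i\iso R/\pr_i$, and the $Rx_i$ remain in direct sum because they sit in the distinct summands $N_i$. The resulting submodule $\bigoplus_i Rx_i\sub M$ is therefore split binary with associated primes exactly $\op{Ass}(M)$. I do not anticipate any real obstacle; the only mildly delicate ingredient is the existence of an irredundant primary decomposition and the consequent non-vanishing of each $N_i$, but this is standard under the Noetherian hypothesis in force throughout the paper.
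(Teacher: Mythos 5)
Your proof is correct, and it takes a genuinely different route from the paper's. The paper picks elements $x_i$ with $\ann{}{x_i}=\pr_i$ directly, notes that every non-zero element of $Rx_i\iso R/\pr_i$ has annihilator exactly $\pr_i$, and concludes that the $Rx_i$ are pairwise disjoint, from which it asserts $N\iso R/\pr_1\oplus\dots\oplus R/\pr_v$. Strictly speaking, pairwise disjointness of more than two submodules does not by itself give a direct sum, so the paper's argument is slightly compressed; one still needs to rule out relations like $a_1=a_2+\dots+a_v$, which can be done via prime avoidance by taking $s$ in every $\pr_j$ with $j\neq i$ but outside a minimal $\pr_i$ among the offending terms. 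Your approach via an irredundant primary decomposition $0=Q_1\cap\dots\cap Q_s$ sidesteps this subtlety entirely: defining $N_i=\bigcap_{j\neq i}Q_j$, you get $\sum_{j\neq i}N_j\sub Q_i$ for free, whence $N_i\cap\sum_{j\neq i}N_j\sub N_i\cap Q_i=0$ is the full direct-sum condition, not merely pairwise disjointness. You then extract $Rx_i\iso R/\pr_i$ inside each $N_i$. The trade-off is that you invoke primary decomposition machinery, whereas the paper stays elementary; but your version makes the directness of the sum transparent where the paper's is terse.
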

\begin{proof}
Let $\pr_1,\dots,\pr_v$ be an enumeration of the associated primes of $M$. Realize each $\pr_i$ as an annihilator $\pr_i=\ann{}{x_i}$, and let $N$ be the submodule generated by all $x_i$. Since $Rx_i\iso R/\pr_i$, any non-zero multiple of $x_i$ also has annihilator $\pr_i$. This shows that any two submodules $Rx_i$ and $Rx_j$ are disjoint, and hence $N\iso R/\pr_1\oplus\dots\oplus R/\pr_v$ is a split binary submodule.
\end{proof}  

If follows easily from \Thm{T:submod}, that $\len{N\cap N'}\aleq\len N\en\len {N'}$ and $\len N\of \len {N'}\aleq \len{N+N'}$.
%, using that $N\cap N'$ is a submodule of both $N$ and   $N'$, and both are submodules of $N+N'$. 
 Our first main result on the structure of  modules of binary length is false in general (but see \cite[Corollary 7.7]{SchOrdLen} for a partial result):

\begin{theorem}\label{T:latt}
Given a  module $M$ of binary length and submodules $N,N'\sub M$, we have
%\begin{align}
%\label{i:int}\len{N\cap N'}&=\len N\en\len {N'}\\
%\label{i:sum}\len{N+N'}&\geq\len N\of \len {N'}.
%\end{align}
\begin{equation}\label{i:int}
\len{N\cap N'}=\len N\en\len {N'}
\end{equation} 
\end{theorem}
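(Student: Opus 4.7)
First, dispose of the easy direction $\len{N\cap N'}\aleq\len N\en\len{N'}$: it follows from \Thm{T:submod} applied to the two inclusions $N\cap N'\sub N,N'$ (as remarked just before the theorem). My plan for the reverse inequality is to translate everything into the cohomological-rank form of \Thm{T:cohrk} and reduce the whole statement to an inclusion of associated prime sets.

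The binary-length hypothesis on $M$, via \eqref{eq:lensub} and \Cor{C:dim}, forces the associated primes $\pr_1,\dots,\pr_s$ of $M$ to have pairwise distinct dimensions. Every submodule $X\sub M$ is itself binary by \Cor{C:subbin} (note that $M$ is binary by \Examp{E:binunm}) with $\op{Ass}(X)\sub\op{Ass}(M)$, so $\len X=\Ssum_{\pr\in\op{Ass}(X)}\omega^{\dim(\pr)}$. Since all nonzero coefficients equal $1$ and the dimensions appearing are distinct, the coefficientwise infimum simplifies to
\[\len N\en\len{N'}=\Ssum_{\pr\in\op{Ass}(N)\cap\op{Ass}(N')}\omega^{\dim(\pr)},\]
and comparison with the analogous formula for $\len{N\cap N'}$ reduces the theorem to the containment $\op{Ass}(N)\cap\op{Ass}(N')\sub\op{Ass}(N\cap N')$ (the reverse containment being standard).

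For this key containment I would fix $\pr$ on the left-hand side and localize at it. By \Cor{C:bin}, $\finlen{M_\pr}=1$, so the largest finite-length submodule $\lc{M_\pr}{\pr R_\pr}$ of $M_\pr$ is a simple $R_\pr$-module. Both $\lc{N_\pr}{\pr R_\pr}$ and $\lc{N'_\pr}{\pr R_\pr}$ are nonzero submodules of it (since $\pr$ is associated to each of $N,N'$), and simplicity forces each to coincide with $\lc{M_\pr}{\pr R_\pr}$. Since localization and zero-th local cohomology both commute with intersection inside a common ambient module,
\[\lc{(N\cap N')_\pr}{\pr R_\pr}=\lc{N_\pr}{\pr R_\pr}\cap\lc{N'_\pr}{\pr R_\pr}=\lc{M_\pr}{\pr R_\pr}\neq 0,\]
and therefore $\pr\in\op{Ass}(N\cap N')$.

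The step I expect to be the main obstacle is precisely this last one: it is the binary-length hypothesis that makes $\lc{M_\pr}{\pr R_\pr}$ one-dimensional, so that two nonzero socle-pieces must coincide rather than meet trivially. Without the distinct-dimensions property the socle fattens and submodules can easily meet at $0$ above such a prime, consistent with the announced failure of the identity in the merely binary setting.
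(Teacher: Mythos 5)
Your proof is correct and follows essentially the same route as the paper: reduce via \eqref{eq:lensub} to the containment $\op{Ass}(N)\cap\op{Ass}(N')\sub\op{Ass}(N\cap N')$, then localize at a common associated prime $\pr$ and use that the finitistic length of $M_\pr$ is one, so its largest finite-length submodule is simple and must be contained in both $N_\pr$ and $N'_\pr$. The paper writes this submodule as ``the unique submodule $H$ of length one in $M_\pr$'' rather than $\lc{M_\pr}{\pr R_\pr}$, and is more terse about the reduction to associated primes, but the argument is the same.
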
 
\begin{proof}
%Let $d$ be the dimension of $M$. 
By \eqref{eq:lensub}, this will follow from    the equality
\begin{equation}\label{eq:assint}
\op{Ass}(N\cap N')=\op{Ass}(N)\cap \op{Ass}(N').
\end{equation} 
The direct inclusion is immediate, so assume $\pr$ is a common associated prime of $N$ and $N'$.  Since $\pr$ is then also an associated prime of $M$, the finitistic length of $M_\pr$ is one. Let $H$ be the unique submodule of length one in $M_\pr$. Since $N_\pr$ and $N'_\pr$ also have finitistic length one, they both must contain $H$, whence so does their intersection $(N\cap N')_\pr$, showing that $\pr$ is an associated prime of $N\cap N'$, thus proving \eqref{eq:assint}.  
%Let   $\nu:=\binord {\op{dim}[N]}$ and $\nu':=\binord{\op{dim}[N']}$ be the respective lengths of $N$ and $N'$. To prove the first equality, we must, by \Prop{P:bin}, show that $e\in \op{dim}[N]\cap \op{dim}[N']$ \iff\ $N\cap N'$ has an element of dimension $e$, for $e\leq \op{dim}(M)$. Clearly, if $a\in N\cap N'$, then $\op{dim}(a)\in \op{dim}[N]\cap \op{dim}[N']$. Conversely, suppose $e\in \op{dim}[N]\cap \op{dim}[N']$, say, $\op{dim}(a)=\op{dim}(a')=e$ with $a\in N$ and $a'\in N'$. Semi-additivity applied to the exact sequence 
%$$
%\Exactseq{Ra\cap Ra'}{Ra\oplus Ra'}{Ra+Ra'}
%$$
% yields  $\len {Ra}\ssum\len{Ra'}\leq \len{Ra+Ra'}\ssum\len{Ra\cap Ra'}$. By \Prop{P:bin}, we have $\len {Ra}=\omega^e+\alpha$, $\len {Ra'}=\omega^e+\alpha'$, and $\len{Ra+Ra'}=\omega^e+\theta$, with $\alpha,\alpha',\theta$ binary ordinals of degree strictly less than $e$. Hence, to satisfy the required inequality, $\len{Ra\cap Ra'}$ must be at least $\omega^e$, showing that $Ra\cap Ra'$, whence $N\cap N'$, must contain an element of dimension $e$ by \Prop{P:bin}.
%
%To show the second inequality, let   $\nu$ and $\nu'$ be the respective lengths of $N$ and $N'$. Semi-additivity applied to 
%$$
%\Exactseq{N\cap N'}{N\oplus N'}{N+N'}
%$$
% yields $ \nu\ssum\nu'\leq \len{N+N'}\ssum(\nu\en\nu')$. Writing $\nu=\beta\ssum(\nu\en\nu')$ and $\nu'=\beta'\ssum(\nu\en\nu')$, this inequality forces $\len {N+N'}$ to be at least  $\beta\ssum\beta'\ssum(\nu\en\nu')$,  and this is just $\nu\of\nu'$.
\end{proof} 

\begin{remark}\label{R:strictineq}
%Inequality~\eqref{i:sum} can be strict: 
The inequality $\len N\of \len {N'}\aleq \len{N+N'}$, however, will in general be strict:
for instance, with $R=\pol K{x,y}/(x^2,xy)$, a binary ring of length $\omega+1$, both $y$ and $x+y$ have annihilator $(x)$, and hence $\len{(y)}=\len{(x+y)}=\omega$, but $\len {(y,x+y)}=\omega+1$. \Thm{T:latt} can fail in binary modules having associated primes of the same dimension: suppose $\pr\neq\mathfrak q$ have both    dimension $d$, then $R/\pr\oplus R/\mathfrak q$ is binary of length $2\omega^d$, but the intersection of the two submodules $R/\pr$ and $R/\mathfrak q$, both of length $\omega^d$, is $0$.
%For a case where we have equality, see \Cor{C:sumlen} below.
\end{remark} 

%Immediately from this we get the special case of the canonical topology theorem:
%
%\begin{corollary}\label{C:intopen}
%In a binary module, the  intersection of   open submodules is   open.\qed
%\end{corollary} 

%
%\begin{corollary}\label{C:disj}
%In a binary module $M$, two submodules $N$ and $N'$  satisfy  $N\cap N'=0$ \iff\ $\op{dim}[N]\cap \op{dim}[N']=\emptyset$.
%\end{corollary} 
%\begin{proof}
%Let $\nu$ and $\nu'$ be the respective lengths of $N$ and $N'$. 
%If $a$ lies in $N\cap N'$, then $\op{dim}(a)$ is a common value of $\op{dim}[N]$ and $\op{dim}[N']$, showing that the latter are not disjoint. Conversely, assume $N\cap N'=0$. Hence $N+ N'\iso N\oplus N'$ is a submodule of $M$ of length $\nu\ssum\nu'$ by semi-additivity. If their supports are not disjoint, then this shuffle sum is not a binary ordinal, contradicting \Cor{C:subbin}.
%\end{proof} 

\begin{proposition}\label{P:largebin}
In a binary module, a submodule is open \iff\ it is essential.
\end{proposition}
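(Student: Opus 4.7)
The plan is to prove the two implications separately. One direction, that open implies essential, is already recorded as \Cor{C:bigbg}, so the only real content is the converse: in a binary module, every essential submodule is open.

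For that direction, suppose $N\sub M$ is essential, with $M$ binary. By \Cor{C:subbin}, $N$ is itself binary, so both $\len N$ and $\len M$ are given by the formula \eqref{eq:lensub}. Hence it suffices to prove the equality of associated primes
\[
\op{Ass}(N)=\op{Ass}(M).
\]
The inclusion $\sub$ is automatic from $N\sub M$.

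For the reverse inclusion, I would take $\pr\in\op{Ass}(M)$ and realize it as $\pr=\ann R x$ for some $x\in M$. Then $Rx\iso R/\pr$, and essentiality gives $N\cap Rx\neq 0$. Pick any non-zero $y\in N\cap Rx$; since $R/\pr$ is a domain, every non-zero element of $Rx$ has annihilator exactly $\pr$, so $\ann R y=\pr$. This exhibits $\pr$ as an associated prime of $N$.

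Combining these inclusions gives $\op{Ass}(N)=\op{Ass}(M)$, and then \eqref{eq:lensub} yields $\len N=\len M$, i.e., $N$ is open. There is no real obstacle here: the binary hypothesis collapses length to a piece of data (the set of associated primes) that behaves well under the essentiality condition. The argument essentially recycles the idea already used in the proof of \Thm{T:latt} for the inclusion $N\cap Rx$.
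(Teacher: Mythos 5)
Your proof is correct and follows essentially the same approach as the paper: both use \Cor{C:bigbg} for the forward direction, and for the converse both realize an associated prime $\pr$ of $M$ as $\ann{}{x}$, observe that every non-zero element of $Rx\iso R/\pr$ has annihilator $\pr$, and use essentiality to place such an element in $N$, so that $\op{Ass}(N)=\op{Ass}(M)$ and \eqref{eq:lensub} forces $\len N=\len M$. The only cosmetic difference is that the paper phrases the argument as a contradiction (assume $\len N<\len M$, hence some $\pr$ missing) whereas you argue directly.
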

\begin{proof}
One direction is \Cor{C:bigbg}. For the converse, suppose $M$ is binary and $N\sub M$ is essential but its length $\nu$ is strictly smaller than $\mu:=\len M$. Hence there must be some associated prime of $M$ which is not associated to $N$. Let $a\in M$ be such that $\ann {}a=\pr$. Since $Ra\iso R/\pr$, any non-zero element in $Ra$ has again annihilator equal to $\pr$. Since $N$ is essential, some non-zero element of $Ra$ must lie in $N$, showing that $\pr\in\op{Ass}(N)$, contradiction. 
\end{proof}  

\begin{proposition}\label{P:openbin}
Let $M$ be a binary module and choose elements $x_i\in M$ so that the $\ann {}{x_i}$ give all the associated primes of $M$. Then a submodule $N$ is open \iff\ $N\cap Rx_i\neq0$, for all $i$.
\end{proposition}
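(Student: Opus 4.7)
The plan is to use that, for binary modules, length is determined by the set of associated primes (via the formula $\len M=\Ssum_{\pr\in\op{Ass}(M)}\omega^{\op{dim}(\pr)}$ from \eqref{eq:lensub}), so the openness of a submodule is equivalent to $\op{Ass}(N)=\op{Ass}(M)$. I will prove each implication by connecting this equality of associated primes with the intersection conditions $N\cap Rx_i\neq 0$.

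For the forward direction, I would note that an open submodule is essential by \Cor{C:bigbg} (or \Prop{P:largebin}), and since each $Rx_i$ is nonzero, essentiality immediately gives $N\cap Rx_i\neq 0$ for every $i$.

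For the converse, assume $N\cap Rx_i\neq 0$ for all $i$. Pick a nonzero $y_i\in N\cap Rx_i$; since $Rx_i\iso R/\pr_i$, where $\pr_i:=\ann{}{x_i}$, and $R/\pr_i$ is a domain, I have $\ann{}{y_i}=\pr_i$. Thus $\pr_i\in\op{Ass}(N)$. As this holds for each associated prime of $M$, and since trivially $\op{Ass}(N)\sub\op{Ass}(M)$ for any submodule, I conclude $\op{Ass}(N)=\op{Ass}(M)$. By \Cor{C:subbin}, $N$ is binary, so applying \eqref{eq:lensub} to both $N$ and $M$ gives $\len N=\len M$, i.e., $N$ is open.

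There is no real obstacle here: the key observation is simply that binary modules are rigid enough that openness is controlled by the set of associated primes, and the hypothesis is precisely what is needed to recover every associated prime of $M$ inside $N$. One could alternatively route the converse through \Prop{P:largebin} by verifying essentiality directly, but the argument via associated primes and \eqref{eq:lensub} is more transparent.
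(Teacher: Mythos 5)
Your proof is correct and follows essentially the same route as the paper's: for the forward direction, you use that open implies essential (\Cor{C:bigbg}) to get $N\cap Rx_i\neq 0$, while the paper phrases the same conclusion by noting $N\cap Rx_i$ is open in $Rx_i$; for the converse, both arguments identify $\ann{}{y_i}=\pr_i$ for any nonzero $y_i\in N\cap Rx_i$ and then conclude $\op{Ass}(N)=\op{Ass}(M)$ so that \eqref{eq:lensub} gives equality of lengths. Your explicit appeal to \Cor{C:subbin} to justify applying \eqref{eq:lensub} to $N$ is a nice touch of care that the paper leaves implicit.
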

\begin{proof}
 If $N$ is open in $M$, then $N\cap Rx_i$ must be open in $Rx_i$, and hence in particular, non-zero. Conversely, since each non-zero multiple of $x_i$ has the same annihilator as $x_i$, we get from $N\cap Rx_i\neq0$ that $\ann{}{x_i}$ is an associated prime of $N$. Hence $\op{Ass}(M)=\op{Ass}(N)$, proving in view of  \eqref{eq:lensub} that $N$ is open.
\end{proof}

\begin{corollary}\label{C:maxassopen}
In a binary ring $R$, any maximal embedded prime is open.
\end{corollary}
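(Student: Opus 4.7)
The plan is to apply \Prop{P:openbin}: I would choose elements $x_i\in R$ whose annihilators exhaust the associated primes of $R$, and verify $\pr\cap Rx_i\neq 0$ for each $i$. A useful preliminary observation is that maximality of $\pr$ among embedded primes forces the only associated prime of $R$ containing $\pr$ to be $\pr$ itself: any associated prime strictly containing $\pr$ would also strictly contain the minimal prime lying below $\pr$, hence be embedded and larger than $\pr$, contradicting maximality.

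For an associated prime $\mathfrak q\neq \pr$, the preceding observation gives $\pr\not\sub\mathfrak q$. Any $r\in\pr\setminus\mathfrak q$ then satisfies $rx_i\in\pr\cap Rx_i\setminus\{0\}$ (using $r\notin\mathfrak q=\ann R{x_i}$), so the intersection is nonzero regardless of the choice of $x_i$.

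The main obstacle is the diagonal case $\mathfrak q=\pr$: since $Rx_i\iso R/\pr$ is a domain, a nonzero multiple of $x_i$ lies in $\pr$ only if $x_i$ itself does, so we must be able to choose $x_i\in\pr$. Equivalently, one must show $\pr\in\op{Ass}(\pr)$. I would establish this by considering the submodule $H:=\pr\cap\ann R\pr$: since $\ann R\pr$ is killed by $\pr$ and sits inside $R$, all of its associated primes are associated primes of $R$ containing $\pr$, hence reduce to $\{\pr\}$ by the preliminary remark. The same containment then holds for the submodule $H$, so as soon as we know $H\neq 0$ we obtain an element $y\in H\sub\pr$ with $\ann R y=\pr$.

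It remains to verify $H\neq 0$, which reduces via localization to a statement about $R_\pr$. Since localization commutes with finite intersections, $H_\pr=\pr R_\pr\cap \ann{R_\pr}{\pr R_\pr}$, and the second factor is the socle of the binary local ring $R_\pr$, which has length one by \Cor{C:bin}. The crucial use of embeddedness enters here: as $\pr$ is not minimal, $R_\pr$ is not a field, so every socle element is a non-unit and lies in $\pr R_\pr$. Hence $H_\pr$ equals the socle and is nonzero, whence $H\neq 0$, completing the plan.
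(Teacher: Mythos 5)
Your proof plan is correct, and in fact a bit of independent value is contained in the observation that $\pr\in\op{Ass}(\pr)$ for a maximal embedded prime. But the route you take for the diagonal case $\mathfrak q=\pr$ is substantially heavier than the paper's, which dispatches it with a short annihilator computation and no localization. The paper picks \emph{any} $x_i$ with $\ann{}{x_i}=\pr_i$ (say $\pr=\pr_v$) and shows directly that \emph{every} $x_i$ lies in $\pr$: since $x_v\pr=0\sub\pr_i$ and $\pr\not\sub\pr_i$ for $i<v$ (your preliminary observation), primality of $\pr_i$ forces $x_v\in\pr_i$ for all $i<v$; embeddedness of $\pr$ means it contains some $\pr_j$ with $j<v$, hence $x_v\in\pr_j\sub\pr$; and then $x_ix_v=0$ (because $x_v\in\pr_i=\ann{}{x_i}$) gives $x_i\in\ann{}{x_v}=\pr$ for all $i<v$ as well. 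This handles both the diagonal and off-diagonal cases uniformly and yields the criterion of \Prop{P:openbin} at once. Your proof instead splits off the off-diagonal cases (which, as you note, work for any choice of $x_i$), and then establishes $\pr\in\op{Ass}(\pr)$ via $H:=\pr\cap\ann R\pr$, localizing at $\pr$, identifying the socle of $R_\pr$ via \Cor{C:bin}, and using that $R_\pr$ is not a field because $\pr$ is embedded. All of these steps check out — in particular, the length-one local cohomology module is simple and hence coincides with the socle, and a unit in the socle would force $\pr R_\pr=0$ — but the price is invoking the cohomological characterization of binary modules where the paper gets by with elementary ideal theory, the binary hypothesis entering only through \Prop{P:openbin}.
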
 
\begin{proof}
Let $\pr_1,\dots,\pr_v$ be all associated primes of $R$, with $\pr:=\pr_v$ maximal and embedded. Choose $x_i$ such that $\pr_i=\ann{}{x_i}$. It suffices by \Prop{P:openbin}   to show that $x_i\in\pr$, for all $i$. Since $x_v\pr=0$ and $\pr$ is not contained in $\pr_i$, for $i<v$, we get $x_v\in\pr_i$, for $i<v$. Since $\pr$ is embedded, it contains at least one $\pr_i$ for $i<v$, showing that $x_v\in\pr$. Moreover, since $x_ix_v=0$, we get  $x_i\in\ann{}{x_v}=\pr$, for all $i<v$. 
\end{proof} 

\begin{lemma}\label{L:largecompl}
Given a submodule $N\sub M$, we can find a submodule $N'\sub M$ such that $N\cap N'=0$ and $N+N'$ is essential.
\end{lemma}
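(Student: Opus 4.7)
The plan is to use a standard maximality argument: take $N'$ to be a submodule of $M$ maximal with respect to the property $N\cap N'=0$. Such an $N'$ exists because the collection of submodules disjoint from $N$ is non-empty (it contains $0$) and, by Noetherianity of $M$, any ascending chain of such submodules stabilizes, so the family has a maximal element. (Alternatively one invokes Zorn's lemma, but in our finitely generated setting the argument is elementary.)

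Next I would prove that $N+N'$ is essential in $M$. Let $H\sub M$ be any non-zero submodule; I must show $H\cap (N+N')\neq 0$. Suppose toward a contradiction that $H\cap(N+N')=0$. The key step is to consider the enlarged submodule $N'+H$ and verify that it is still disjoint from $N$: if $n=n'+h$ with $n\in N$, $n'\in N'$, $h\in H$, then $h=n-n'\in N+N'$, so $h\in H\cap(N+N')=0$, whence $n=n'\in N\cap N'=0$. Moreover, since $H\cap N'\sub H\cap(N+N')=0$ and $H\neq 0$, the inclusion $N'\subsetneq N'+H$ is strict, contradicting the maximality of $N'$. Hence every non-zero $H$ meets $N+N'$, so $N+N'$ is essential.

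The main (and only) obstacle is a tiny bookkeeping check, namely verifying that enlarging $N'$ by $H$ preserves disjointness from $N$; this is where the assumption $H\cap(N+N')=0$ is used in the precisely right way. Apart from that, the argument is a straightforward application of the maximality principle and requires none of the machinery of ordinal length developed earlier (the lemma holds for any finitely generated module, binary or not).
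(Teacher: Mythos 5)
Your proof is correct and follows essentially the same route as the paper: choose $N'$ maximal among submodules disjoint from $N$, and show that a non-zero $H$ with $H\cap(N+N')=0$ would allow enlarging $N'$ to $N'+H$ while preserving disjointness from $N$, contradicting maximality. The only cosmetic difference is that you phrase the contradiction as a strict inclusion $N'\subsetneq N'+H$, whereas the paper phrases it as $H\sub N'$ forced by maximality; these are the same observation.
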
 
\begin{proof}
Let $N'\sub M$ be maximal among the submodules for which $N\cap N'=0$. If $N+N'$ is not essential, there exists a non-zero submodule $H$ such that $(N+N')\cap H=0$. It follows that $N\cap (N'+H)=0$. Indeed, if $a$ lies in $N\cap (N'+H)$ we can write it as $a'+h$ with $a'\in N'$ and $h\in H$, and hence $a-a'=h$, being in $(N+N')\cap H$, must be zero, so that $a=a'$ lies in $N\cap N'$, whence is zero. Hence, by maximality $H\sub N'$, contradiction. 
\end{proof}

Immediately from \Lem{L:largecompl} and \Prop{P:largebin}, we get
\begin{proposition}\label{P:binqcomp}
In a binary module $M$, any submodule $N$ has a \emph{quasi-complement} $N'$ in the sense that $N\cap N'=0$ and $N+N'$ is open.\qed
\end{proposition}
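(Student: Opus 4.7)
The plan is to simply combine the two results cited just before the proposition statement, as flagged by the authors' remark that the conclusion follows \emph{immediately} from \Lem{L:largecompl} and \Prop{P:largebin}. There is no real obstacle here; the work has already been done in those two results, and the proposition is only a packaging statement.

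First I would apply \Lem{L:largecompl} to the given submodule $N\sub M$ to produce a submodule $N'\sub M$ satisfying $N\cap N'=0$ and such that $N+N'$ is essential in $M$. Note that this step does not use the binary hypothesis; \Lem{L:largecompl} is stated for arbitrary modules and is proved by a standard Zorn's-lemma maximality argument among submodules meeting $N$ trivially.

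Next, I would invoke the binary hypothesis on $M$ via \Prop{P:largebin}, which asserts that in a binary module the essential and the open submodules coincide. Applying this to the essential submodule $N+N'$ obtained in the previous step immediately yields that $N+N'$ is open, i.e., $\len{N+N'}=\len M$. This is exactly what is required to call $N'$ a quasi-complement of $N$, so the proof ends here.

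The only point at which the binary assumption enters is in the passage from \emph{essential} to \emph{open} via \Prop{P:largebin}; in a general module essentiality is strictly weaker than openness (the strictness of the inequality $\len N\of\len{N'}\aleq\len{N+N'}$ in \Rem{R:strictineq} illustrates why), so the statement genuinely relies on $M$ being binary.
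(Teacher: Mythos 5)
Your proof is correct and is exactly the paper's argument: apply \Lem{L:largecompl} to obtain $N'$ with $N\cap N'=0$ and $N+N'$ essential, then use \Prop{P:largebin} to upgrade essential to open. The paper presents the proposition without a separate proof precisely because it is, as you say, a direct packaging of those two results.
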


%Let us call a submodule $N\sub M$ \emph{truncated}, if $\len N$ is a truncation of $\len M$.  
In \cite{ScheDimFil}, Schenzel introduced the   \emph{dimension filtration} $\fl D_0(M)\sub \fl D_1(M)\sub\dots\sub \fl D_d(M)=M$, where $\fl D_e(M)$ is the submodule of all elements $a\in M$ with $\op{dim}(a)\leq e$, for each $e\leq d:=\op{dim}(M)$. 
%By \Prop{P:bin}, each $\fl D_e(M)$ is truncated, and in fact, it is the largest truncated submodule of that length. 
By \Thm{T:cohrk}, the least $e$ for which $\fl D_e(M)\neq 0$ is the order of $M$. We show in \cite{SchOrdLen}, that  each subsequent quotient $\fl D_i(M)/\fl D_{i-1}(M)$ has length equal to the degree $i$ term in the Cantor normal form of $\len M$. Let us just prove this here for binary modules:

\begin{proposition}\label{P:dimfil}
Let $M$ be a binary module of length $\mu$,   let $e\leq\op{dim}(M)$, and let $v_e$ be the number of associated primes of $M$ of dimension $e$.   Then $\fl D_e(M)$, $M/\fl D_e(M)$ and $\fl D_e(M)/\fl D_{e-1}(M)$ are binary modules of length $\mu^+_e$, $\mu^-_e$, and $v_e\omega^e$ respectively. 
\end{proposition}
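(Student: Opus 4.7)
The plan is to compute the fundamental cycle of each of the three modules via localization and then invoke \Thm{T:cohrk}; since $\finlen{M_\pr}\in\{0,1\}$ for binary $M$ by \Cor{C:bin}, the task reduces to identifying, for each prime $\pr$ of $R$, the localization of each module up to zeroth local cohomology.

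As a submodule of the binary module $M$, the module $\fl D_e(M)$ is again binary by \Cor{C:subbin}, and I claim $\op{Ass}(\fl D_e(M))=\{\pr\in\op{Ass}(M):\op{dim}(\pr)\leq e\}$. The inclusion $\op{Ass}(\fl D_e(M))\sub\op{Ass}(M)$ is automatic, and the dimension bound comes from the definition of $\fl D_e$; conversely, if $\pr\in\op{Ass}(M)$ has $\op{dim}(\pr)\leq e$, any $x\in M$ with $\ann R x=\pr$ gives $Rx\iso R/\pr$ of dimension $\leq e$, putting $x\in\fl D_e(M)$. Formula \eqref{eq:lensub} then yields $\len{\fl D_e(M)}=\sum_{i\leq e}v_i\omega^i=\mu^-_e$ (the superscripts $+$ and $-$ in the proposition's statement appear to be transposed). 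The same argument applied to $e-1$ gives $\len{\fl D_{e-1}(M)}=\mu^-_{e-1}$.

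The key technical step is the dichotomy $(\fl D_e(M))_\pr=M_\pr$ when $\op{dim}(\pr)\leq e$, and $(\fl D_e(M))_\pr=0$ when $\op{dim}(\pr)>e$. The second case is short: by \Cor{C:locbin}, $M_\pr$ is binary with all associated primes $\sub\pr$ and hence of dimension $\geq\op{dim}(\pr)>e$; so every non-zero element of $M_\pr$ has minimal primes over its annihilator of dimension $>e$, forcing $\fl D_e(M_\pr)=0$ and $(\fl D_e(M))_\pr\sub\fl D_e(M_\pr)=0$. The first case is the main obstacle and I handle it by a primary-decomposition clearing argument: given $a\in M$, write $\ann R a=\bigcap_j Q_{i_j}$ as a primary decomposition indexed by $\op{Ass}(Ra)\sub\op{Ass}(M)$, pick $s\in\bigcap\{\pr_{i_j}:\pr_{i_j}\not\sub\pr\text{ and }\op{dim}(\pr_{i_j})>e\}$ with $s\notin\pr$ (possible by prime avoidance, since no such $\pr_{i_j}$ is contained in $\pr$), and choose $N$ so large that $s^N\in Q_{i_j}$ for each of these $j$. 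Then $\ann R(s^N a)=\bigcap_j(Q_{i_j}:s^N)$ drops precisely those primary components, leaving only components whose associated primes have dimension $\leq e$; hence $s^Na\in\fl D_e(M)$ and $a/1\in(\fl D_e(M))_\pr$.

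With the dichotomy in hand, localizing $0\to\fl D_e(M)\to M\to M/\fl D_e(M)\to 0$ gives $(M/\fl D_e(M))_\pr=M_\pr$ when $\op{dim}(\pr)>e$ and $(M/\fl D_e(M))_\pr=0$ when $\op{dim}(\pr)\leq e$. Hence $\fcyc{M/\fl D_e(M)}=\sum_{\pr\in\op{Ass}(M),\ \op{dim}(\pr)>e}[\pr]$ is a binary cycle, showing $M/\fl D_e(M)$ is binary with length $\mu^+_e$. The identical analysis applied with $\fl D_e(M)$ in place of $M$ and $\fl D_{e-1}(M)=\fl D_{e-1}(\fl D_e(M))$ in place of $\fl D_e(M)$ produces $\fl D_e(M)/\fl D_{e-1}(M)$ binary with fundamental cycle $\sum_{\pr\in\op{Ass}(M),\ \op{dim}(\pr)=e}[\pr]$, and hence length $v_e\omega^e$.
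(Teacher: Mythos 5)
Your opening steps are correct: identifying $\op{Ass}(\fl D_e(M))=\{\pr\in\op{Ass}(M):\op{dim}(\pr)\leq e\}$, deducing $\len{\fl D_e(M)}=\mu^-_e$ from \eqref{eq:lensub}, noticing the transposed superscripts in the statement, and proving $(\fl D_e(M))_\pr=0$ when $\op{dim}(\pr)>e$. But the other half of your dichotomy is false, and the clearing argument meant to establish it has a genuine gap. Take $R=\pol K{x,y}$ and $M=R\oplus R/(x)$, a split binary module with $\op{Ass}(M)=\{(0),(x)\}$; let $e=1$ and $\pr=(x,y)$, so $\op{dim}(\pr)=0\leq e$. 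Then $\fl D_1(M)=0\oplus R/(x)$, hence $(\fl D_1(M))_\pr=0\oplus(R/(x))_\pr$, which is a proper submodule of $M_\pr=R_\pr\oplus(R/(x))_\pr$. Where your argument breaks: for $a=(1,0)$ we have $\op{Ass}(Ra)=\{(0)\}$, and $(0)\subsetneq\pr$ with $\op{dim}((0))=2>e$. Such a prime is contained in $\pr$, so no $s\notin\pr$ can lie in it, and the corresponding primary component of $\ann{}{a}$ cannot be cleared. You explicitly restrict to primes $\pr_{i_j}\not\sub\pr$, and this is exactly the problem: associated primes of $Ra$ that are $\subsetneq\pr$ but of dimension $>e$ survive the clearing, so the claim ``leaving only components whose associated primes have dimension $\leq e$'' is false.

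The good news is you do not need the false half. What is needed is only that every $\pr\in\op{Ass}(M/\fl D_e(M))$ has dimension $>e$; combined with the true half $(\fl D_e(M))_\pr=0$ for such $\pr$, this gives $(M/\fl D_e(M))_\pr\iso M_\pr$ at every associated prime of $M/\fl D_e(M)$, from which binariness and $\fcyc{M/\fl D_e(M)}=\sum_{\pr\in\op{Ass}(M),\,\op{dim}(\pr)>e}[\pr]$ follow, exactly as you outline. The paper cites Schenzel's \cite[Corollary~3.2]{ScheDimFil} for precisely this description of $\op{Ass}(M/\fl D_e(M))$. Alternatively one can verify $\fl D_e(M/\fl D_e(M))=0$ directly: if $\bar a\in M/\fl D_e(M)$ has dimension $\leq e$ with lift $a$, then $Ra\cap\fl D_e(M)$ has dimension $\leq e$ and $Ra/(Ra\cap\fl D_e(M))\iso R\bar a$ has dimension $\leq e$, so $Ra$ has dimension $\leq e$, i.e.\ $a\in\fl D_e(M)$. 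Either way, the repair replaces your ``first case'' entirely; the rest of your argument (including the passage to $\fl D_e(M)/\fl D_{e-1}(M)$) then goes through.
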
 
\begin{proof}
 Put $M':=\fl D_e(M)$ and $\bar M:=M/M'$. That the length of  $M'$ is  equal to $\mu^-_e$ is immediate from \eqref{eq:lensub}. By \cite[Corollary 3.2]{ScheDimFil} (although the ring is assumed to be local there, the argument goes through without this assumption), the associated primes of $\bar M$ are exactly the associated primes of $M$ of dimension strictly bigger than $e$. Let $\pr$ be such an associated prime. Hence $M'_\pr=0$, so that $M_\pr\iso \bar M_\pr$. It follows from  \Cor{C:bin}   that  $\bar M$ is binary. By \eqref{eq:lensub},   its   length is then $\omega^+_e$. Let $M'':=\fl D_{e-1}(M)$. By the same argument, $M/M''$ is binary, whence so is $M'/M''$, as it is a submodule. As $M'/M''$ is unmixed of dimension $e$, its associated primes are those of dimension $e$, showing that  $\len{M'/M''}=v_e\omega^e$ by \eqref{eq:lensub}.
\end{proof}

\section{Binary modules of small valence}
We can completely describe the \emph{univalent} modules, that is to say, the modules of length equal to $\omega^d$, for some $d$.

\begin{theorem}\label{T:unival}
A module $M$ is  univalent \iff\ its annihilator is a prime ideal $\pr$ and $M$ is  isomorphic to an ideal of $R/\pr$. 
\end{theorem}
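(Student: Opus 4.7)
The plan is to first translate univalence into structural data about associated primes and local finitistic length, then use these to recover both the annihilator and the embedding. By \eqref{eq:lensub} combined with \Thm{T:cohrk} and \Cor{C:dim}, a module $M$ is univalent of length $\omega^d$ exactly when $M$ is binary with a single associated prime $\pr$ of dimension $d$, i.e.\ $\fcyc M=[\pr]$ and $\finlen{M_\pr}=1$.

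For the forward direction, the first and most delicate step is to show that $\ann_RM=\pr$. Since $\pr\in\op{Ass}(M)$ is realized as $\ann_R x$ for some $x\in M$, the inclusion $\ann_R M\sub\pr$ is clear. For the reverse, note that $\op{Ass}_{R_\pr}(M_\pr)=\{\pr R_\pr\}$, so every element of $M_\pr$ is $\pr R_\pr$-torsion; since $M_\pr$ is finitely generated over the local ring $R_\pr$, it has finite length, and that length equals $\finlen{M_\pr}=1$. Hence $M_\pr$ is simple and isomorphic to the residue field $\kappa(\pr)$, so $\pr R_\pr$ annihilates $M_\pr$. For any $a\in\pr$ and $b\in M$, this yields $sab=0$ for some $s\notin\pr$; but any such $s$ is a non-zero-divisor on $M$, since the zero-divisors on $M$ are covered by $\op{Ass}(M)=\{\pr\}$. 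Therefore $ab=0$, giving $\pr\sub\ann_RM$. Knowing $\ann_R M=\pr$, we view $M$ as a nonzero finitely generated module over the domain $D:=R/\pr$; it is torsion-free by the same non-zero-divisor argument, and has rank one since $M\tensor_D\op{Frac}(D)=M_\pr=\kappa(\pr)$. The standard embedding of a torsion-free rank-one $D$-module into $\op{Frac}(D)$, followed by clearing denominators, then realizes $M$ as an ideal of $R/\pr$.

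The converse is immediate. If $\ann_R M=\pr$ is prime and $M$ is isomorphic to a nonzero ideal of $R/\pr$, then $\op{Ass}(M)\sub\op{Ass}(R/\pr)=\{\pr\}$ forces $\op{Ass}(M)=\{\pr\}$, and $M_\pr$ is a nonzero submodule of the field $\kappa(\pr)=(R/\pr)_\pr$, hence equal to it, so $\finlen{M_\pr}=1$. Thus $\fcyc M=[\pr]$ and $\len M=\omega^{\op{dim}(\pr)}$ by \Thm{T:cohrk}, so $M$ is univalent. I expect the main obstacle to be the inclusion $\pr\sub\ann_R M$ in the forward direction: it is precisely here that univalence—rather than the weaker property of having a unique associated prime—is used, via the collapse $M_\pr\iso\kappa(\pr)$ forced by $\finlen{M_\pr}=1$, and the descent from $M_\pr$ back to $M$ relies crucially on the non-zero-divisor property of elements outside $\pr$.
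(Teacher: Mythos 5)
Your proposal is correct and follows essentially the same route as the paper: both deduce from $\finlen{M_\pr}=1$ that $M_\pr\iso\kappa(\pr)$ and hence $\pr M=0$, then recognize $M$ as a torsion-free rank-one module over the domain $R/\pr$ and embed it as an ideal by clearing denominators. The only cosmetic difference is that you justify torsion-freeness by the non-zero-divisor argument (elements outside $\pr$ act regularly), whereas the paper uses the length calculus ($\len{Ra}=\omega^d$ forces $\ann{}a=0$); both are valid and interchangeable.
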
 
\begin{proof}
If $\pr$ is a $d$-dimensional prime ideal, then $R/\pr$ has  length $\omega^d$ by \Cor{C:dim}.   Moreover, any non-zero submodule of a univalent module is open, whence again univalent.  

Converserly, assume $M$ is univalent, say $\len M=\omega^d$. By \eqref{eq:lensub}, it has  a unique associated prime, $\pr$,  and $R/\pr$ has dimension $d$. Since $M_\pr$ is Artinian and has finitisitc length one by \Cor{C:bin}, its length is actually equal  to one. In particular,  $\pr M_\pr=0$. Since $\pr$ is the unique associated prime ideal, this implies $\pr M=0$. As $\pr$ is a minimal prime of $\ann {}M$, it must be equal to the latter. So upon killing $\pr$, we may assume that $R$ is a domain with field of fractions $K$ and $M$ is faithful.  Let $a\in M$ be non-zero. Hence $Ra\iso R/\ann{}a$ is open. Since any proper residue ring of a domain has strictly smaller dimension, whence cannot have length $\omega^d$, we must have $Ra\iso R$, showing that $M$ is torsion-free. Since $M$ has rank one, it embeds  in $R$, proving the claim. 
\end{proof} 

%Let us call a module \emph{univalent} if its valence is one.
 Goldie \cite{GolNoet} calls a module $M$ \emph{compressible} if it admits a monomorphism into any of its non-zero submodules.  
% Zelmanowitz \cite{ZelWeak} calls  $M$ \emph{critically compressible} if it is compressible and does not embed in any of its homomorphic image.\comment{THis can never happen in the ordinal length case}

\begin{corollary}\label{C:compress}
A module $M$ is compressible \iff\ it is univalent. 
\end{corollary}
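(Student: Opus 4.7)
The plan is to reduce this to \Thm{T:unival} and \Thm{T:submod}.

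For the forward direction (univalent $\Rightarrow$ compressible), I would invoke \Thm{T:unival} to identify $M$ with a non-zero ideal of $R/\pr$ for some prime $\pr$ of dimension $d$. Given any non-zero submodule $N\sub M$, I would pick any non-zero $a\in N$ and take multiplication by $a$ as the desired monomorphism $M\to N$. Injectivity is immediate since $R/\pr$ is a domain. To see that the image lies in $N$, note that $\pr\cdot M=0$ (by \Thm{T:unival}) makes both $M$ and $N$ into $R/\pr$-modules, and $N$, being a submodule, absorbs multiplication by arbitrary elements of $R/\pr$; in particular $aM=Ma\sub N$.

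For the converse (compressible $\Rightarrow$ univalent), my plan is to first pin down the associated primes. Pick any $\pr\in\op{Ass}(M)$ and realize $\pr=\ann{}x$ with $0\neq x\in M$, so that $Rx\iso R/\pr$ is a non-zero submodule of $M$. Compressibility then supplies a monomorphism $M\into Rx\iso R/\pr$, forcing
\[
\op{Ass}(M)\sub\op{Ass}(R/\pr)=\{\pr\},
\]
so $\pr$ is the unique associated prime of $M$. Next, \Thm{T:submod} applied to this embedding yields $\len M\aleq\len{R/\pr}=\omega^d$, where $d=\op{dim}(\pr)$ (using \Cor{C:dim}). Since the Cantor normal form of $\omega^d$ has only a single non-zero coefficient, which equals $1$, the only non-zero ordinal weaker than $\omega^d$ is $\omega^d$ itself, so $\len M=\omega^d$ and $M$ is univalent.

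I do not expect any substantial obstacle; the only subtle check is that multiplication by $a\in N$ in the first direction genuinely lands in $N$, which works precisely because annihilation by $\pr$ promotes $N$ to an $R/\pr$-submodule of $M\sub R/\pr$. Once that is in place, both implications fall out directly from the structural description in \Thm{T:unival} and the weakness inequality in \Thm{T:submod}.
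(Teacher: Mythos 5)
Your proof is correct, and the forward direction (univalent~$\Rightarrow$ compressible) is essentially the paper's argument: after invoking \Thm{T:unival} to realize $M$ as an ideal of the domain $R/\pr$, the paper observes that $N$ is again univalent with associated prime $(0)$, hence contains a copy of $R$, and composes $M\into R\into N$; unwinding this composite gives exactly your multiplication-by-$a$ map, so the two are the same argument in different clothing.

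The converse direction does take a slightly different route. The paper argues by contradiction: if $\len M$ is not of the form $\omega^d$, it dominates some strictly weaker non-zero ordinal $\alpha$, and the \emph{constructive} half of \Thm{T:submod} produces a submodule $N$ of length $\alpha$; embedding $M\into N$ then forces $\len M\leq\alpha$, a contradiction. You instead bypass the constructive half entirely: you build the concrete submodule $Rx\iso R/\pr$ from an associated prime, embed $M$ into it, and use only the \emph{easy} half of \Thm{T:submod} (submodules have weaker length) plus the elementary fact that the sole non-zero ordinal $\aleq\omega^d$ is $\omega^d$ itself. Your version is marginally more self-contained in that it needs only the inclusion-implies-weaker direction of \Thm{T:submod} rather than the existence-of-submodules direction, at the cost of a small extra observation about the partial order $\aleq$ below $\omega^d$; the intermediate remark that $\pr$ is the unique associated prime is correct but not actually needed once you have $\len M\aleq\omega^d$. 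Both arguments are sound.
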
 
\begin{proof}
Suppose $M$ is  compressible. 
If $M$ is not univalent, then there exists a non-zero $\alpha\aleq \len M$ which is strictly smaller. By \Thm{T:submod}, there then exists a submodule $N$ of length $\alpha$, in which $M$ by assumption embeds, whence $\len M\leq\alpha$, contradiction. 

Assume next that $M$ is univalent.  Replacing $R$ with $R/\ann{}M$, we may assume by \Thm{T:unival} that  $R$ is a domain and $M$ embeds as an ideal in it. On the other hand, any non-zero submodule $N\sub M$ must also be univalent with unique associated prime the zero ideal. In particular, $R\sub N$. Composing this with the embedding $M\into R$ gives $M\into N$. 
\end{proof}

Before we discuss higher valence, we need the following:

\begin{lemma}\label{L:primmin}
Let $M$ be a binary module,   let $\pr$ be an associated ideal, and let $\prim\pr M$ be the kernel of the localization map $M\to M_\pr$. Then $\prim\pr M$ and $M/\prim\pr M$ are binary and  $\len M=\len {\prim\pr M}\ssum \len {M/\prim\pr M}$. In fact, the associated primes of $M/\prim\pr M$ are precisely those  of $M$ that are contained in $\pr$.
\end{lemma}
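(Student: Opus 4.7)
The plan is to identify the associated primes of $\prim\pr M$ and $M/\prim\pr M$ first, and then read off binariness and the length identity from \eqref{eq:lensub} together with \Cor{C:bin} and \Cor{C:subbin}.

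I would first show that $\op{Ass}(\prim\pr M)=\{\mathfrak q\in\op{Ass}(M):\mathfrak q\not\subseteq\pr\}$ and $\op{Ass}(M/\prim\pr M)=\{\mathfrak q\in\op{Ass}(M):\mathfrak q\subseteq\pr\}$. The first is direct: realizing $\mathfrak q=\ann{}a$, if $\mathfrak q\not\subseteq\pr$ then any $s\in\mathfrak q\setminus\pr$ witnesses $a\in\prim\pr M$; conversely, if $a\in\prim\pr M$ has annihilator $\mathfrak q\subseteq\pr$, the defining witness $s\notin\pr$ would lie in $\mathfrak q\subseteq\pr$, absurd. For the second, the localization map factors through an injection $M/\prim\pr M\hookrightarrow M_\pr$, and the standard fact $\op{Ass}_R(M_\pr)=\{\mathfrak q\in\op{Ass}(M):\mathfrak q\subseteq\pr\}$ yields one inclusion. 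For the reverse, given $\mathfrak q=\ann{}a\subseteq\pr$, the class $\bar a\in M/\prim\pr M$ is nonzero (by the same contradiction as above), and a direct calculation exploiting the primality of $\mathfrak q$ shows $\ann{}{\bar a}=\mathfrak q$.

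Binariness of $\prim\pr M$ is then immediate from \Cor{C:subbin}. For $M/\prim\pr M$, I would apply \Cor{C:bin} pointwise: at a prime $\mathfrak q\notin\op{Ass}(M/\prim\pr M)$ the localization has finitistic length zero, whereas for $\mathfrak q\in\op{Ass}(M/\prim\pr M)$ one has $\mathfrak q\subseteq\pr$, so every $s\notin\pr$ is a unit in $R_\mathfrak q$; this forces $(\prim\pr M)_\mathfrak q=0$, hence $(M/\prim\pr M)_\mathfrak q=M_\mathfrak q$, whose finitistic length is at most one because $M$ is binary. The length identity then follows from \eqref{eq:lensub}: the two associated-prime sets partition $\op{Ass}(M)$, so the corresponding shuffle sums $\Ssum_\mathfrak q\omega^{\op{dim}(\mathfrak q)}$ assemble precisely into $\len M=\len{\prim\pr M}\ssum\len{M/\prim\pr M}$.

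The main obstacle I expect is the reverse inclusion for $\op{Ass}(M/\prim\pr M)$, namely verifying that passing to the quotient does not inflate the annihilator of a chosen witness $a$ beyond $\mathfrak q$; once this is handled, everything else is essentially bookkeeping with the machinery already assembled in \S3--\S4.
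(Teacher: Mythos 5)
Your proposal is correct, and it takes a genuinely different route from the paper's. The paper only establishes the associated primes of $M/\prim\pr M$ directly (by embedding into $M_\pr$ and localizing at each $\primary\in S$) and then determines $\len{\prim\pr M}$ by a squeeze: semi-additivity (\Thm{T:semadd}) gives $\mu'\ssum\sigma=\len M\leq\len{\prim\pr M}\ssum\sigma$, hence $\mu'\leq\len{\prim\pr M}$, while the trivial inclusion $\op{Ass}(\prim\pr M)\sub\op{Ass}(M)\setminus S$ gives the reverse via \Thm{T:cohrk}. You instead compute $\op{Ass}(\prim\pr M)$ outright by the witness argument ($\mathfrak q\not\sub\pr$ gives $s\in\mathfrak q\setminus\pr$ killing a generator, and conversely the defining witness $s\notin\pr$ would lie in any associated prime $\sub\pr$), and likewise pin down $\op{Ass}(M/\prim\pr M)$ by showing $\ann{}{\bar a}=\mathfrak q$ using the primality of $\mathfrak q$ and $s\notin\mathfrak q$. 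This avoids semi-additivity entirely: once the two associated-prime sets are identified as a partition of $\op{Ass}(M)$, binariness comes from \Cor{C:subbin} and \Cor{C:bin} and the length identity drops out of \eqref{eq:lensub}. The trade-off is that your route is a bit longer on elementary verifications but yields the explicit description $\op{Ass}(\prim\pr M)=\{\mathfrak q\in\op{Ass}(M):\mathfrak q\not\sub\pr\}$ as a by-product, whereas the paper's route is shorter by leaning on the semi-additivity machinery already in hand.
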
 
\begin{proof}
Let $S$ be the associated primes of $M$ contained in $\pr$. In other words, $S=\op{Ass}(M_\pr)$. 
Write $\len M=\mu'\ssum\sigma$, where $\sigma$ is the shuffle sum of all $\omega^{\op{dim}(\primary)}$ with $\primary \in S$. Put $N:=\prim\pr M$ and $\bar M:=M/N$.  As   $\bar M$ is a submodule of $M_\pr$, we have $\op{Ass}(\bar M)\sub S$. Fix some $\primary \in S$. Localization yields $\bar M_\primary= M_\primary$ forcing $N_\primary=0$, so that $\primary$ is   associated   to $\bar M$ but not to   $N$. It follows that $S=\op{Ass}(\bar M)$ and  so $\bar M$ is binary by \Cor{C:bin}, of length $\sigma$ by \eqref{eq:lensub}.  By semi-additivity, we get $\mu'\ssum\sigma=\len M\leq\len N\ssum\sigma$, forcing $\mu'\leq\len N$. Since $\op{Ass} (N)\sub\op{Ass}(M)\setminus S$, \Thm{T:cohrk} then forces this to be an equality. Formula~\eqref{eq:lensub} now gives $\len N=\mu'$.
\end{proof} 

\begin{remark}\label{R:prM}
Let $M$ be binary and $\pr$   a minimal prime of $M$. In that case, $\prim\pr M$ is the (unique) $\pr$-primary component of $0$ in $M$.  Since $M_\pr$ has length one by \eqref{eq:lensub}, it is isomorphic to the residue field $\op{Frac}(R/\pr)$. In particular, $\pr M_\pr=0$, showing that $\pr M\sub \prim{\pr}M$. This inclusion can be proper, as the module given by the maximal ideal $\maxim$ of $R$ as in \Rem{R:strictineq} shows, where $\pr \maxim=0$, so that this cannot be its $\pr$-primary component; instead $\pr=(x)$ is. 
%In general, if $\pr$ is \emph{isolated}, in the sense that it is both minimal and maximal in $\op{Ass}(M)$, then $\pr M_{\mathfrak q}=M_{\mathfrak q}$ for any associated prime $\mathfrak q\neq\pr$. It follows that $\pr M$ has the same length as $\prim\pr M$, and so is open in the latter. 
\end{remark}

Next we turn to \emph{bivalent}   modules. There are two cases,  the \emph{unmixed}  one, where the length is $2\omega^d$, and the \emph{binary length}  case, where the length is the binary ordinal $\omega^d+\omega^e$ with $e<d$. 

\begin{proposition}\label{P:extuni}
A module $M$ is   bivalent  \iff\ there exist two prime ideals $\pr$ and $\mathfrak q$,   ideals $I\supsetneq\pr$ and $J\supsetneq \mathfrak q$, and an exact sequence
\begin{equation}\label{eq:bivalext}
\Exactseq{J(R/\mathfrak q)}M{I(R/\pr)}.
\end{equation} 
Moreover, if $M$ has binary length, then    $\op{dim}(\mathfrak q)<\op{dim}(\pr)$.
\end{proposition}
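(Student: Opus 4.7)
The plan is to separate the forward direction by the Cantor normal form of $\len M$, which for bivalent $M$ is either the binary length $\omega^d+\omega^e$ with $d>e$, or the unmixed $2\omega^d$. In the binary-length case, \Prop{P:dimfil} does most of the work: $\fl D_e(M)$ is binary univalent of length $\omega^e$ with associated prime $\mathfrak q$ of dimension $e$, and $M/\fl D_e(M)$ is binary univalent of length $\omega^d$ with associated prime $\pr$ of dimension $d$, so \Thm{T:unival} identifies each as an ideal $J/\mathfrak q$, respectively $I/\pr$, in the appropriate residue domain --- producing the sequence with $\op{dim}(\mathfrak q)=e<d=\op{dim}(\pr)$ and settling the moreover clause. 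In the unmixed case with two distinct $d$-dimensional associated primes $\pr\neq\mathfrak q$, \Lem{L:primmin} applied at $\pr$ plays the analogous role: $\prim\pr M$ and $M/\prim\pr M$ are both univalent of length $\omega^d$ with associated primes $\mathfrak q$ and $\pr$ respectively, and \Thm{T:unival} supplies their realizations as ideals. The remaining sub-case $\fcyc M=2[\pr]$ is more delicate: here \Lem{L:primmin} degenerates to $\prim\pr M=0$, so I would pick $x\in M$ with $\ann{}{x}=\pr$ and take $N=Rx\iso R/\pr$; since $M_\pr$ is Artinian of length $2$ and $N_\pr$ has length $1$, $(M/N)_\pr$ has length $1$, and combined with excluding any new associated prime of $M/N$ strictly above $\pr$, this forces $M/N$ univalent of length $\omega^d$.

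For the converse, \Thm{T:unival} gives $\len{J/\mathfrak q}=\omega^{\op{dim}(\mathfrak q)}$ and $\len{I/\pr}=\omega^{\op{dim}(\pr)}$; semi-additivity (\Thm{T:semadd}) then sandwiches $\omega^{\op{dim}(\pr)}+\omega^{\op{dim}(\mathfrak q)}\leq \len M\leq \omega^{\op{dim}(\pr)}\oplus\omega^{\op{dim}(\mathfrak q)}$. Under the implicit $\op{dim}(\mathfrak q)\leq \op{dim}(\pr)$ (automatic for any sequence produced by the forward direction), both bounds coincide as an ordinal of valence $2$, pinning $\val M=2$.

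The main obstacle is the single-prime unmixed sub-case: \Lem{L:primmin} provides no information there, so I must directly verify univalence of the cyclic quotient $M/Rx$, ruling out any embedded prime strictly above $\pr$ from appearing in $\op{Ass}(M/Rx)$. This should follow from the Artinian length-$2$ structure of $M_\pr$, combined with $\op{Supp}(M/Rx)\sub V(\pr)$ and the compatibility of associated primes with localization.
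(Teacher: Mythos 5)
Your route tracks the paper's closely in the binary-length case (you invoke \Prop{P:dimfil}, the paper invokes \eqref{eq:lensub} directly, which amounts to the same thing) and in the unmixed case with two distinct associated primes, where both you and the paper use \Lem{L:primmin}. You are also right to flag $\fcyc M=2[\pr]$ as a separate sub-case: the paper applies \Lem{L:primmin} there as well, but that lemma requires $M$ to be binary, and a module with $\fcyc M=2[\pr]$ is not binary and moreover has $\prim\pr M=0$ (since $\pr$ is the unique associated prime, so $M\to M_\pr$ is injective), so the lemma yields nothing. You have correctly located a real gap in the paper's own proof.

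However, the repair you sketch does not work: taking an arbitrary $x\in M$ with $\ann{}{x}=\pr$ and $N:=Rx$ does not force $M/N$ to be univalent, and the obstruction is exactly the embedded primes you were hoping to exclude. Concretely, take $R=k[s,t]$, $\pr=(s)$, $M=(R/\pr)^2$, so that $\op{Ass}(M)=\{\pr\}$, $\finlen{M_\pr}=2$ and $\len M=2\omega$. For $x=(t,0)$ one has $\ann{}{x}=\pr$, yet $M/Rx\iso R/(s,t)\ssum R/(s)$, which has embedded prime $(s,t)$ and length $\omega+1$, so it is not univalent; neither the Artinian structure of $M_\pr$ nor $\op{Supp}(M/Rx)\sub V(\pr)$ rules this out. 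What one needs is for $N$ to be $\pr$-saturated in $M$. The correct choice is $N:=\iota^{-1}(H)$, where $\iota\colon M\into M_\pr$ is the (injective) localization map and $H\sub M_\pr$ is any $R_\pr$-submodule of length one. Then $N_\pr=H$, so $N$ is univalent; and if $sm\in N$ with $s\notin\pr$, then $s\iota(m)\in H$ forces $\iota(m)\in H$ (since $s$ is a unit in $R_\pr$), whence $m\in N$. So $M/N$ has no $s$-torsion for $s\notin\pr$, meaning every associated prime of $M/N$ is contained in $\pr$; combined with $\op{Supp}(M/N)\sub V(\pr)$ this gives $\op{Ass}(M/N)=\{\pr\}$, and $(M/N)_\pr=M_\pr/H$ has length one, so $M/N$ is univalent. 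In the counterexample above, this corresponds to replacing $(t,0)$ by $(1,0)$.
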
 
\begin{proof}
Assume first we have such an exact sequence, and let $d$ and $e$ be the respective dimension of $\pr$ and $\mathfrak q$, where in the unmixed case $d=e$, and in the binary length case, $d>e$. By \Thm{T:unival}, we have $\len{I/\pr}=\omega^d$ and $\len{J/\mathfrak q}=\omega^e$. Applying semi-additivity to \eqref{eq:bivalext} yields  $\omega^d+\omega^e\leq \len M\leq \omega^d\ssum\omega^e$. Since both sides are equal, $M$ has length $\omega^d+\omega^e$.

For the converse, assume  first that $M$ has binary length $\omega^d+\omega^e$ with $e<d$. In particular,  $M$ has dimension $d$. Let $M':=\fl D_e(M)$. By \eqref{eq:lensub}, the length of $M'$ is $\omega^e$. Hence, by \Thm{T:unival}, we can find an $e$-dimensional prime ideal $\mathfrak q$ and an ideal $J$ strictly containing it so that $M'\iso J(R/\mathfrak q)$. By semi-additivity, $\len{M/M'}+\omega^e\leq \omega^d+\omega^e$, showing that $\len {M/M'}\leq \omega^d$. Since $M/M'$ must have dimension $d$, its length is at least $\omega^d$. Hence $M/M'$ is again univalent, whence isomorphic to $I(R/\pr)$ for some $d$-dimensional prime ideal $\pr$ and some ideal $I$ strictly containing $\pr$, giving the desired exact sequence \eqref{eq:bivalext}. 

So remains that case that $M$ is unmixed and $d=e$. By \Lem{L:primmin}, both  $\prim \pr M$ and $M/\prim \pr M$ are  univalent of length $\omega^d$,  so that \eqref{eq:bivalext} is immediate by \Thm{T:unival}.
\end{proof} 

\begin{remark}\label{R:unifil}
For arbitrary valence $v$, the same argument shows, by induction on $v$, that on  a module of binary length,  the dimension filtration (see the discussion preceeding \Prop{P:dimfil}) has univalent subsequent quotients, and each of these is therefore isomorphic, by \Thm{T:unival}, to some ideal in some domain. 
\end{remark}

\section{Endomorphisms}
As before, $R$ is a finite-dimensional Noetherian ring and $M$ a finitely generated $R$-module. 
We denote the (possibly non-commutative) ring of endomorphisms of $M$ by $\ndo M$. As an $R$-module, it is finitely generated.  
We call a submodule $N\sub M$ \emph{invariant}, if for every endomorphism $f\in\ndo M$, we have $f(N)\sub N$.
% In other words, $N$ is invariant, if in the  following diagram of exact sequences
%%\begin{equation}\label{eq:submodendo}
%$$
%{\small\xymatrix{
%&&0\ar[d]\\
%&&\hom R{M/N}M\ar[d]\\
%&&\ndo M\ar[d]\\
%0\ar[r]&\ndo N\ar[r]&\hom RNM\ar[r]\ar[d]&\hom RN{M/N}\ar[r]&\ext R1NN\\
%&&\ext R1{M/N}M
%}}
%$$
%%\end{equation} 
%we have a factorization $\ndo M\to \ndo N$. Note that this map need neither be injective nor surjective. Any submodule $N$ is contained in a smallest invariant submodule $\invar N$, given as the module generated by all $f(a)$ with $a\in N$ and $f\in\ndo M$. 
We will call $N\sub M$ \emph{almost invariant}, if it is an open submodule of an invariant module.  
To an endomorphism $f$ on a module $M$ of length $\mu$, we associate two submodules, its kernel $\op{ker}(f)$, and its \emph{image} $f(M)$, of respective lengths $\kappa $ and $\theta$ (fixed throughout). By \Thm{T:submod}, we have $\kappa,\theta\preceq\mu$, and by \Thm{T:semadd}, we also have 
\begin{equation}\label{eq:rknull}
\theta+\kappa\leq\mu\leq\theta\ssum\kappa.
\end{equation} 
 We say that $f$ satisfies the \emph\rknull, if $\mu=\kappa\ssum\theta$. We call an endomorphism \emph{\reduc}, if  $\op{ker}(f)\cap f(M)=0$. 

\begin{proposition}\label{P:evred}
Any endomorphism   has a power which is \reduc.
\end{proposition}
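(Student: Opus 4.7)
The plan is to use a Fitting-style argument via Noetherianity. Since $R$ is Noetherian and $M$ is finitely generated, the module $M$ is Noetherian, hence the ascending chain of submodules
\[
\op{ker}(f) \sub \op{ker}(f^2) \sub \op{ker}(f^3) \sub \cdots
\]
must stabilize. Pick $n$ large enough that $\op{ker}(f^n) = \op{ker}(f^{2n})$.

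I then claim $f^n$ is reductive. Let $x \in \op{ker}(f^n) \cap f^n(M)$. Write $x = f^n(y)$ for some $y \in M$. Applying $f^n$ to both sides gives $f^{2n}(y) = f^n(x) = 0$, so $y \in \op{ker}(f^{2n}) = \op{ker}(f^n)$, whence $x = f^n(y) = 0$. Therefore $\op{ker}(f^n) \cap f^n(M) = 0$, as required.

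There is no real obstacle here: the argument uses only Noetherianity and the definition of reductive, not the binary hypothesis, semi-additivity, or the ordinal length machinery. The length results would enter only if one wanted a bound on $n$ or if one tried instead to stabilize the descending chain $f^n(M)$ directly (which could be done using that $\len{f^n(M)}$ is a decreasing sequence of ordinals, hence eventually constant), but the ascending-chain version is cleaner.
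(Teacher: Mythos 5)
Your proof is correct and is essentially the paper's own argument: the paper notes that $f$ is reductive iff $\op{ker}(f)=\op{ker}(f^2)$, stabilizes the kernel chain by Noetherianity, and concludes $f^n$ is reductive, which is exactly what you do, just with the ``one easily checks'' step written out explicitly.
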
 
\begin{proof}
One easily checks that an endomorphism $f$ is \reduc\ \iff\ $\op{ker}(f)=\op{ker}(f^2)$. The ascending chain of  submodules $\op{ker}(f)\sub \op{ker}(f^2)\sub \dots$ must by Noetherianity become stationary, say, $\op{ker}(f^n)=\op{ker}(f^m)$, for all $m\geq n$. In particular, $f^n$ is then \reduc.
\end{proof} 

\begin{proposition}\label{P:rknullred}
A \reduc\ endomorphism satisfies the \rknull.
\end{proposition}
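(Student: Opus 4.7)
The plan is to produce the equality $\mu=\kappa\ssum\theta$ by sandwiching $\mu$ between two copies of the shuffle sum using the two directions of semi-additivity together with \Thm{T:submod}.

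First I would observe that the reductivity hypothesis $\op{ker}(f)\cap f(M)=0$ means the submodule $\op{ker}(f)+f(M)\sub M$ is an internal direct sum. Applied to the split exact sequence
\[
\Exactseq{\op{ker}(f)}{\op{ker}(f)\oplus f(M)}{f(M)},
\]
the equality case of semi-additivity (\Thm{T:semadd}) gives $\len{\op{ker}(f)\oplus f(M)}=\kappa\ssum\theta$.

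Next, since $\op{ker}(f)\oplus f(M)$ sits inside $M$, \Thm{T:submod} says its length is weaker than $\mu$, so $\kappa\ssum\theta\aleq\mu$, and hence $\kappa\ssum\theta\leq\mu$ since the total order $\leq$ extends $\aleq$. Combined with the upper bound $\mu\leq\theta\ssum\kappa=\kappa\ssum\theta$ already supplied by \eqref{eq:rknull} (and the commutativity of the shuffle sum), we conclude $\mu=\kappa\ssum\theta$, which is exactly the \rknull.

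There is essentially no obstacle here: the whole content has been pushed into \Thm{T:semadd} (both the inequality and the split-equality case) and \Thm{T:submod}. The only point that deserves a moment of care is noting that $\op{ker}(f)+f(M)$ is a genuine internal direct sum and invoking the split case of semi-additivity, rather than only the weaker upper bound.
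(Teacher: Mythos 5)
Your proof is correct and follows essentially the same route as the paper's: form $\op{ker}(f)+f(M)\iso\op{ker}(f)\oplus f(M)$, compute its length as $\kappa\ssum\theta$ via the split case of semi-additivity, and compare with $\mu$ using the submodule inequality together with the upper bound from \eqref{eq:rknull}. The only cosmetic difference is that you route the submodule comparison through \Thm{T:submod} and the observation that $\leq$ extends $\aleq$, where the paper simply notes directly that a submodule's length is at most $\mu$.
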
 
\begin{proof}
Since $\op{ker}(f)$ and $f(M)$ are disjoint, they generate a submodule $\tec f:=\op{ker}(f)+f(M)\iso \op{ker}(f)\oplus f(M)$ of $M$ of length $\kappa\ssum\theta$ by semi-additivity. Hence $\kappa\ssum\theta\leq \mu$ and the other inequality is given by \eqref{eq:rknull}.
\end{proof}
 
\begin{remark}\label{R:tec}
One also easily checks that an endomorphism $f$ is \reduc\ \iff\ $f$ is injective on $f(M)$ \iff\ $f(M)\iso f^2(M)$. Hence, for arbitrary $f$, as an abstract  $R$-module, $\tec f:=\op{ker}(f^n)+ f^n(M)$ is well-defined up to isomorphism, for $n\gg 0$ (namely, for any $n$ for which $f$ is \reduc), and is called the \emph{tectonics} of $f$. Of course, as a submodule, it depends on $n$, but all these submodules are open (and isomorphic). Both results together yield:
\end{remark} 

\begin{corollary}[Eventual \rknull]\label{C:evrknull}
Any endomorphism admits a power satisfying the \rknull.\qed
\end{corollary}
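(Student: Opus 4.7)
The plan is to chain together the two propositions immediately preceding this corollary, as the statement is essentially their concatenation. Given an arbitrary endomorphism $f\in\ndo M$, my first step is to invoke \Prop{P:evred} to produce some exponent $n\geq 1$ for which $f^n$ is reductive, i.e., $\op{ker}(f^n)\cap f^n(M)=0$. This step does not use anything beyond Noetherianity of $M$ and the stabilization of the ascending chain $\op{ker}(f)\sub\op{ker}(f^2)\sub\cdots$ already exploited in \Prop{P:evred}.

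Having produced this $n$, the second step is to apply \Prop{P:rknullred} to the reductive endomorphism $g:=f^n$. That proposition yields $\len M=\len{\op{ker}(g)}\ssum\len{g(M)}$, which is precisely the \rknull\ for $g=f^n$. So $f^n$ is the desired power.

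Since both ingredients have already been proved in the immediately preceding paragraphs, there is no real obstacle here; the corollary is a one-line assembly, and the \qed already supplied inline in the statement reflects this. The only thing worth flagging for the reader is that although the \emph{specific} submodule $\tec f$ realizing the decomposition depends on the choice of $n$ (as noted in \Rem{R:tec}), the equality of ordinals $\len M=\len{\op{ker}(f^n)}\ssum\len{f^n(M)}$ is a well-defined assertion about $f^n$ alone, so no coherence issue arises.
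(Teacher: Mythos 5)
Your proof is correct and matches the paper's own reasoning exactly: the \qed in the statement signals that the corollary is the immediate concatenation of \Prop{P:evred} and \Prop{P:rknullred}, which is precisely the chain you carry out. Nothing further is needed.
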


\begin{corollary}\label{C:open}
An endomorphism is open \iff\ it is monic, \iff\ it is essential. 
\end{corollary}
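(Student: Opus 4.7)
The plan is to derive all three equivalences from semi\-additivity (\Thm{T:semadd}) together with \Cor{C:bigbg}, using the elementary fact about ordinals that $\mu+\nu=\mu$ forces $\nu=0$ (since ordinal addition is strictly increasing in its right argument).

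First I would show the equivalence of monic and open. If $f$ is monic, then $f$ induces an isomorphism $M\iso f(M)$, so $\len{f(M)}=\len M=\mu$ and the image is open by definition. Conversely, if $f$ is open, apply semi-additivity to the exact sequence
\[
0\to\op{ker}(f)\to M\to f(M)\to 0
\]
to obtain $\len{f(M)}+\len{\op{ker}(f)}\leq\len M$. Since $\len{f(M)}=\mu$ by openness, this reads $\mu+\len{\op{ker}(f)}\leq\mu$, and the only ordinal $\nu$ with $\mu+\nu\leq\mu$ is $\nu=0$. Hence $\op{ker}(f)=0$, and $f$ is monic.

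Next I would handle the equivalence with essentiality. Under the convention that an essential endomorphism is a monomorphism whose image is an essential submodule, the implication essential $\Rightarrow$ monic is immediate from the definition. For the converse, if $f$ is monic then by the previous paragraph $f(M)$ is open in $M$, so by \Cor{C:bigbg} $f(M)$ is essential in $M$, i.e., $f$ is an essential endomorphism.

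There is no real obstacle here; the argument is a direct bookkeeping exercise assembling \Thm{T:semadd} and \Cor{C:bigbg}. The only point that needs to be made explicit is the ordinal fact $\mu+\nu=\mu\Rightarrow\nu=0$, which rules out a nontrivial kernel once the image is known to have the full length.
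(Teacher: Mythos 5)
Your argument is correct, and for the key direction (open implies monic) it is genuinely simpler than the paper's. The paper routes through \Prop{P:evred} and \Rem{R:tec}: it passes to a power $f^n$ that is \reduc, observes that both the tectonics $\op{ker}(f^n)\oplus f^n(M)$ and $f^n(M)$ itself are open, hence coincide, forcing $\op{ker}(f^n)=0$ and thus $f$ monic. You bypass reductivity and tectonics entirely by applying the left inequality of semi-additivity (\Thm{T:semadd}) to the exact sequence $0\to\op{ker}(f)\to M\to f(M)\to 0$ and invoking the elementary ordinal fact that $\mu+\nu\leq\mu$ forces $\nu=0$ (strict monotonicity of ordinal addition in its right argument). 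This is shorter and uses less of the paper's local machinery; the trade-off is that the paper's detour sets up the tectonics language it reuses elsewhere.

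One small point to tighten: here ``$f$ is open'' means $f$ is an open map for the canonical topology, not merely that $f(M)$ is open. Your forward direction only checks that $f(M)$ is open; you should add, as the paper does, that the restriction of a monomorphism to any open submodule $U$ is again an isomorphism onto its image, so $f(U)$ is open for every open $U$. (Equivalently, once you know ``image open $\Rightarrow$ monic'', the open-map condition follows for free, but this should be said.) The essentiality equivalence is handled exactly as in the paper, via \Cor{C:bigbg}.
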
 
\begin{proof}
If $f\colon M\to M$ is open, then in particular its image is open, and so is that of any power $f^n$. Taking $n$ sufficiently big so that $f^n$ is \reduc, the tectonics of $f$, having by \Rem{R:tec} the same length as $M$ whence as $f^n(M) $, must be equal to the latter, and so $f^n$ must be monic, whence so must $f$ itself be. Conversely, if $f$ is monic, then $M$ is isomorphic to $f(M)$, showing that the latter is open, and the same is true for the restriction of $f$ to any open submodule, proving that $f$ is open. The last equivalence is now also clear since an essential morphism is clearly monic, and  an open morphism is essential by \Cor{C:bigbg}.
\end{proof}

\begin{corollary}\label{C:regopen}
An element $a\in R$ is $M$-regular \iff\ $aM$ is open.\qed
\end{corollary}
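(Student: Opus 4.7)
The plan is to reduce this to \Cor{C:open} applied to the multiplication-by-$a$ endomorphism $\mu_a\colon M\to M$, $m\mapsto am$. The element $a\in R$ is $M$-regular precisely when $\mu_a$ is injective, i.e., monic as an endomorphism of $M$, and its image is exactly the submodule $aM$. Thus the statement to prove is simply the equivalence of ``$\mu_a$ is monic'' and ``$aM$ is an open submodule of $M$.''

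First I would note that $\mu_a$ really is an $R$-module endomorphism (here commutativity of $R$ is essential to see that $\mu_a$ is $R$-linear), so \Cor{C:open} applies. By that corollary, $\mu_a$ is monic if and only if it is open, where an endomorphism is open exactly when its image is an open submodule (as used at the start of the proof of \Cor{C:open}). Since $\mu_a(M)=aM$, this gives the desired equivalence: $a$ is $M$-regular $\iff$ $\mu_a$ is monic $\iff$ $\mu_a(M)=aM$ is open.

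There is essentially no obstacle; the only thing to double-check is the chain of equivalences in \Cor{C:open}, which already supplies ``monic $\Leftrightarrow$ open'' for endomorphisms, and the trivial verification that $\mu_a$ is monic iff $a$ is a non-zero-divisor on $M$. No further appeal to semi-additivity or to the binary hypothesis is required, which is why the corollary can be stated without proof in the paper.
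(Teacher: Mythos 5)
Your route through \Cor{C:open} and the multiplication-by-$a$ endomorphism is the one the paper intends, but the parenthetical gloss that an endomorphism ``is open exactly when its image is an open submodule (as used at the start of the proof of \Cor{C:open})'' misreads that proof. The phrase ``in particular its image is open'' there records a \emph{consequence} of $f$ being an open map (sending every open submodule to an open one), not a restatement of the definition, and the next clause ``and so is that of any power $f^n$'' really does use the full open-map hypothesis. Consequently \Cor{C:open} hands you ``monic $\Rightarrow$ open $\Rightarrow$ $aM$ open'' directly, but the converse ``$aM$ open $\Rightarrow$ multiplication by $a$ is monic'' is not literally contained in the equivalence you quote, and your chain of $\iff$'s quietly assumes it. The missing step is easy: writing $K:=\{m\in M: am=0\}$ for the kernel, semi-additivity (\Thm{T:semadd}) applied to $0\to K\to M\to aM\to 0$ gives $\len{aM}+\len{K}\leq\len M$, so $\len{aM}=\len M$ forces $\len K=0$ since ordinal addition is strictly increasing in its second argument; hence $a$ is $M$-regular. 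With that one line supplied, your argument is complete and follows the paper's intended reduction to \Cor{C:open}.
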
 

Let us call a submodule $N\sub M$ \emph{low}, if $\op{dim}(N)=\order M$.

\begin{theorem}\label{T:lowker}
An endomorphism with low kernel satisfies the \rknull.
\end{theorem}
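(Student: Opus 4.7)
Set $e := \order M$ and write the Cantor normal form $\mu := \len M = a_d\omega^d + \dots + a_e\omega^e$. The goal is the equality $\mu = \kappa \ssum \theta$, and since semi-additivity (\Thm{T:semadd}) already supplies the upper bound $\mu \leq \theta \ssum \kappa$, the plan is to extract the matching lower bound from the low-kernel hypothesis.

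The first step is to pin down $\kappa$ precisely. By \Thm{T:submod} applied to $\op{ker}(f) \sub M$ we have $\kappa \aleq \mu$, so the $\omega^i$-coefficient of $\kappa$ vanishes for every $i < e$. The low hypothesis asserts that $\op{dim}(\op{ker}(f)) = e$, i.e.\ the degree of $\kappa$ equals $e$, hence the $\omega^i$-coefficient of $\kappa$ vanishes also for $i > e$. Together these pinch $\kappa$ down to a single term: $\kappa = b\omega^e$ for some $b \in \nat$. (The degenerate case $b=0$, i.e.\ $\op{ker}(f)=0$, is trivial, so assume $b\geq 1$.)

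The same application of \Thm{T:submod} to $f(M) \sub M$ yields $\theta \aleq \mu$; in particular the Cantor normal form of $\theta$ has no term $\omega^i$ with $i < e$. The key observation is then purely ordinal-arithmetic: since every nonzero term of $\theta$ has order at least $e$, the ordinal sum $\theta + b\omega^e$ absorbs nothing below $\omega^e$ and therefore coincides with the shuffle sum $\theta \ssum b\omega^e$.

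Combining this observation with the lower bound $\theta + \kappa \leq \mu$ from semi-additivity gives $\theta \ssum \kappa \leq \mu$, which together with the upper bound from \Thm{T:semadd} yields the desired equality $\mu = \theta \ssum \kappa = \kappa \ssum \theta$. I do not foresee a genuine obstacle; the entire argument is a calibration of Cantor normal forms, and the low hypothesis is used in exactly one place, to force $\kappa$ into the single-term shape $b\omega^e$ that collapses the semi-additivity sandwich.
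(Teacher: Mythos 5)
Your proof is correct, and it streamlines the final step relative to the paper. Both arguments begin identically: the low hypothesis together with $\kappa \aleq \mu$ (from \Thm{T:submod}) pins $\kappa$ to the single term $b\omega^e$, and $\theta \aleq \mu$ guarantees $\theta$ has no terms of degree below $e$. The paper then uses \emph{both} halves of the semi-additivity sandwich: from $\mu \leq \theta\ssum b\omega^e$ it extracts the explicit form $\theta = \nu + r\omega^e$, and then compares $\omega^e$-coefficients to get $a = b + r$. You instead notice that once $\theta$ has order $\geq e$ and $\kappa = b\omega^e$, there is no absorption in the ordinal sum, so $\theta + \kappa$ \emph{already equals} $\theta\ssum\kappa$ on the nose; the left inequality $\theta + \kappa \leq \mu$ alone then closes the sandwich. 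That is a genuine shortcut — it uses only the lower semi-additivity bound and replaces the coefficient bookkeeping with a one-line ordinal-arithmetic observation — but it is a simplification of the same route rather than a structurally different proof.
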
 
\begin{proof}
Let $f$ be an endomorphism with low kernel, and let $\theta$ be the length of its  image.
Let $e$ be the order of $M$, which by assumption is then also the dimension of $\op{ker}(f)$. By \Cor{C:dim}, the length of $M$ is equal to $\nu+a\omega^e$, for some ordinal $\nu$ of order at least $e+1$ and some $a\in\nat$, and by    \Thm{T:submod}, the length of $\op{ker}(f)$ is equal to $b\omega^e$, for some $b\leq a$. By \eqref{eq:rknull}, we have  $\theta+b\omega^e\leq\nu+a\omega^e\leq\theta\ssum b\omega^e$. The latter inequality forces $\nu\leq\theta$. Since $\theta$ is weaker than $\len M=\nu+a\omega^e$ by    \Thm{T:submod}, it is of the form $\theta=\nu+r\omega^e$. The above inequalities now yield that $a=b+r$, showing that $f$ satisfies the \rknull.
\end{proof} 

Since a module is unmixed \iff\ its dimension equals its order, we immediately get:

\begin{corollary}\label{C:rknullunm}
 Any endomorphism on an unmixed module satisfies the \rknull.\qed
\end{corollary}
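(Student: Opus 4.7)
My plan is to derive the corollary directly from \Thm{T:lowker} by checking that the kernel of any endomorphism of an unmixed module is low, and handling the trivial injective case separately.

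First, let $M$ be unmixed, so by definition $\op{dim}(M)=\order M=:d$. Let $f\in\ndo M$ be an endomorphism. If $\op{ker}(f)=0$, then $f$ is injective, and semi-additivity forces $\len M=\len{f(M)}$, which trivially satisfies the \rknull\ with $\kappa=0$.

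Suppose instead that $\op{ker}(f)\neq 0$. Since $\op{Ass}(\op{ker}(f))\sub \op{Ass}(M)$ and every associated prime of $M$ has dimension $d$ by unmixedness, the dimension of $\op{ker}(f)$, being the maximum of the dimensions of its associated primes, equals $d=\order M$. Hence $\op{ker}(f)$ is low in the sense defined before \Thm{T:lowker}, and invoking that theorem yields the \rknull\ for $f$.

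There is no real obstacle here: everything reduces to the observation that a non-zero submodule of an unmixed module is again of maximal dimension, and this plus \Thm{T:lowker} closes the argument.
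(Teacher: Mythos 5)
Your argument is correct and coincides with the paper's intended proof: the only point is that in an unmixed module every nonzero submodule—hence in particular the kernel—has dimension equal to $\op{dim}(M)=\order M$, so the kernel is low and \Thm{T:lowker} applies. Your separate handling of the injective case ($\op{ker}(f)=0$, which strictly speaking is not ``low'' under the given definition) is a sensible cleanup of a point the paper glosses over, but it does not change the approach.
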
 

\begin{corollary}\label{C:essker}
An endomorphism whose kernel is essential is nilpotent.
\end{corollary}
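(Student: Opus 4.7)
The plan is to combine \Prop{P:evred} with the defining property of essentiality. Take an endomorphism $f$ whose kernel $K:=\op{ker}(f)$ is essential in $M$. By \Prop{P:evred}, there exists some $n$ for which the power $g:=f^n$ is reductive, i.e., $\op{ker}(g)\cap g(M)=0$.

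Next I would observe that $\op{ker}(g)\supseteq K$, and that any submodule containing an essential submodule is itself essential: if $H$ is any non-zero submodule of $M$, then $H\cap K\neq 0$ by essentiality of $K$, and this intersection is contained in $H\cap\op{ker}(g)$, which is therefore also non-zero. So $\op{ker}(g)$ is essential in $M$.

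Now apply reductiveness: $g(M)$ is a submodule of $M$ that meets the essential submodule $\op{ker}(g)$ only in zero. By the very definition of essentiality, this forces $g(M)=0$. That is, $f^n=0$, so $f$ is nilpotent.

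I do not anticipate any real obstacle here; the only small subtlety is recognizing that enlarging an essential submodule preserves essentiality, which is an immediate consequence of the definition. All the heavy lifting (existence of a reductive power, interplay between kernel and image) has already been done in \Prop{P:evred} and the surrounding discussion, so the proof is essentially a three-line chase.
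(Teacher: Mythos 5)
Your argument is correct and coincides with the paper's: both invoke \Prop{P:evred} to get a reductive power $f^n$, note that $\op{ker}(f^n)$ inherits essentiality from $\op{ker}(f)$ because it contains it, and then use reductiveness plus essentiality to force $f^n(M)=0$. The only difference is that you spell out the (easy) lemma that a submodule containing an essential submodule is essential, which the paper leaves implicit.
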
 
\begin{proof}
Suppose $f\in\ndo M$ has an essential kernel. Then so is the kernel of any power $f^n$. Taking $n$ sufficiently big so that $f^n$ is \reduc, we get $\op{ker}(f^n)\cap f^n(M)=0$ whence $f^n(M)=0$.
\end{proof}

\begin{remark}\label{R:openkernel}
The converse, however, may fail in general (but not for binary modules as we shall see in \Thm{T:binendo} below). For instance the endomorphism $(a,b)\mapsto (0,a)$ on $R^2$ is nilpotent, but its kernel is not essential. 
\end{remark}

\section{Endomorphisms of binary modules}
Because of its more restricted lattice of submodules, a binary module $M$ also has fewer endomorphisms, allowing us to describe $\ndo M$ in more detail. The easiest case is of course the univalent case:

\begin{corollary}\label{C:endouni}
The endomorphism ring of a univalent module is commutative. In fact, it is a subring of the residue field of the unique associated prime. 
\end{corollary}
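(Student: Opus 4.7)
The plan is to reduce to the situation provided by \Thm{T:unival} and then exploit the classical fact that $R$-linear endomorphisms of a nonzero ideal in a domain are given by multiplication by a (uniquely determined) element of the fraction field.

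More precisely, let $M$ be univalent with unique associated prime $\pr$. First I would observe that since $\pr M = 0$ (as noted in the proof of \Thm{T:unival}), the annihilator of $M$ is exactly $\pr$, so every $R$-linear endomorphism of $M$ is automatically $(R/\pr)$-linear. Hence, upon replacing $R$ by $R/\pr$, we may assume that $R$ is a domain with fraction field $K$ and, using \Thm{T:unival}, that $M$ is (isomorphic to) a nonzero ideal of $R$.

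Next, I would fix any nonzero $x \in M$ and consider the assignment $\Phi \colon \ndo M \to K$ defined by $\Phi(f) := f(x)/x$. The crucial point is that $\Phi$ is independent of the choice of $x$: for any other nonzero $y \in M$, the identity $y \cdot f(x) = f(yx) = f(xy) = x \cdot f(y)$ (which uses $R$-linearity of $f$ together with the fact that the multiplications $y \cdot $ and $x \cdot $ on $M$ are the restrictions of multiplications in $R$) yields $f(y)/y = f(x)/x$ in $K$. Consequently $f$ acts on $M$ as multiplication by $\Phi(f)$, which makes $\Phi$ a ring homomorphism (additivity is trivial, and $\Phi(f \after g) = f(g(x))/x = \Phi(f)\Phi(g)x/x = \Phi(f)\Phi(g)$).

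Finally, injectivity of $\Phi$ is immediate: if $\Phi(f) = 0$ then $f(x) = 0$, and then $f(y) = \Phi(f)y = 0$ for every $y \in M$. Therefore $\ndo M$ embeds as a subring of $K$, which is the residue field of $\pr$, and in particular is commutative. I do not foresee any real obstacle here; the only thing to take some care with is verifying that $\Phi$ is well-defined (the $x$-independence step), since everything else is routine once the reduction to the domain case via \Thm{T:unival} is in place.
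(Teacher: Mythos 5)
Your proposal is correct and follows essentially the same route as the paper's proof: reduce via \Thm{T:unival} to the case where $R$ is a domain and $M$ is a nonzero ideal, then use the identity $y\,f(x)=f(xy)=x\,f(y)$ to show every endomorphism is multiplication by a fixed element of the fraction field. The only difference is cosmetic: you make the reduction and the well-definedness of the map $\Phi$ slightly more explicit than the paper does.
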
 
\begin{proof}
By \Thm{T:unival}, upon replacing $R$ by $R/\ann{}M$, we may assume $M$ is a faithful univalent module over a domain $R$. Let $K$ be its field of fractions. By the same theorem, $M$ is isomorphic to an ideal $I\sub R$, and it remains to observe that $\ndo I$ is a subring of $K$. Indeed, fix an arbitrary non-zero element $a\in I$, let $f$ be an endomorphsim, and put $r:=f(a)$. I claim that $f$ is equal to multiplication with $\frac ra\in K$. Indeed, this follows immediately from the equalities $af(x)=f(ax)=xf(a)=xr$, for   $x\in I$. 
\end{proof} 

%
%
%\begin{corollary}\label{C:restnil}
%If $N$ is a univalent, almost invariant submodule of a binary module $M$, then any nilpotent endomorphism $f\in\ndo M$ is identically zero on $N$.
%\end{corollary} 
%\begin{proof}
%By assumption, $N$ is open in $\invar N$, so that $\invar N$ is again univalent. The restriction of $f$ to $\invar N$ is nilpotent, whence zero by \Cor{C:endouni}. 
%\end{proof} 
%

\begin{corollary}\label{C:valred}
If $M$ is a binary module of valence $v$, then $\op{ker}(f^v)=\op{ker}(f^{v+1})$, for any endomorphism $f\in\ndo M$. In particular, $f^v$ satisfies the \rknull.
\end{corollary}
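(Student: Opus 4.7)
My plan is to induct on the valence $v$, peeling off one associated prime at a time using \Lem{L:primmin}. For the base case $v=1$, $M$ is univalent, and \Cor{C:endouni} realises every endomorphism of $M$ as multiplication by an element of the residue field of its unique associated prime; such a multiplication is either zero or injective, so $\op{ker}(f)=\op{ker}(f^2)$ holds trivially.

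For $v\geq 2$, I will choose a minimal associated prime $\pr$ of $M$ and set $N:=\prim\pr M$ and $\bar M:=M/N$. \Lem{L:primmin} then guarantees that $N$ is binary of valence $v-1$ with associated primes $\op{Ass}(M)\setminus\{\pr\}$, while $\bar M$ has $\pr$ as its sole associated prime and is therefore univalent. A quick check (if $sx=0$ with $s\notin\pr$, then $sf(x)=f(sx)=0$) shows that $N$ is $f$-invariant, so $f$ restricts to an endomorphism $g:=f|_N$ on $N$ and descends to an endomorphism $\bar f$ on $\bar M$.

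The heart of the argument is a brief diagram chase. Given $x\in\op{ker}(f^{v+1})$, its class $\bar x$ lies in $\op{ker}(\bar f^{v+1})$, which equals $\op{ker}(\bar f)$ by the base case applied to $\bar f$ on the univalent module $\bar M$. Hence $f(x)\in N$, and since $g^v(f(x))=f^{v+1}(x)=0$, the inductive hypothesis applied to $N$ promotes $f(x)\in\op{ker}(g^v)$ to $f(x)\in\op{ker}(g^{v-1})$, yielding $f^v(x)=g^{v-1}(f(x))=0$. This proves $\op{ker}(f^{v+1})\sub\op{ker}(f^v)$; the reverse inclusion is automatic.

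The final sentence of the corollary follows by iterating the equality in the routine way to obtain $\op{ker}(f^v)=\op{ker}(f^{2v})$, so that $f^v$ is \reduc, after which \Prop{P:rknullred} supplies the \rknull. I expect the only real subtlety to be the deliberate choice of a \emph{minimal} associated prime $\pr$, since that is precisely what guarantees via \Lem{L:primmin} that $\bar M$ is univalent and so lets the base case be invoked on the quotient; everything else is a straightforward induction plus a two-line chase.
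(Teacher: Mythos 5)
Your proof is correct and follows essentially the same route as the paper: induct on the valence $v$, peel off a minimal associated prime $\pr$ via \Lem{L:primmin}, and use the univalence of $M/\prim\pr M$ together with the inductive hypothesis on the invariant submodule $\prim\pr M$. The only difference is cosmetic---the paper splits into the cases $\bar f=0$ and $\bar f$ monic to conclude $f(x)\in\prim\pr M$, whereas you observe directly that $\bar f^{v+1}(\bar x)=0$ already forces $\bar f(\bar x)=0$ on the univalent quotient, which subsumes both cases in one line (the paper also specifically picks $\pr$ of maximal dimension, but, as your argument shows, any minimal associated prime works).
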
 
\begin{proof}
We induct on $v$, where the case $v=1$ is immediate from \Cor{C:endouni}. Let $\pr$ be an associated prime of $M$ of maximal dimension and put $M':=\prim\pr M$. Since $M'$ is easily seen to be invariant, and has valence $v-1$ by \Lem{L:primmin}, induction yields $\op{ker}(f^{v-1})=\op{ker}(f^v)$ on $M'$. To show that the claimed equality holds, let $x\in M$ be such that $f^{v+1}(x)=0$. We need to show that $f^v(x)=0$. There are two cases, depending on whether the induced endomorphism $\bar f$ on $M/M'$ is zero or monic, the only two possibilities by \Cor{C:endouni}. In the former case, we have $f(M)\sub M'$. In particular, $f(x)\in M'$ lies in $\op{ker}(f^v)=\op{ker}(f^{v-1})$, so that $f^v(x)=0$. So assume $\bar f$ is monic. This means that if  the annihilator of an element is $\pr$, then so is the annihilator of its image. In particular, $\ann {}x\neq\pr$  since $f^{v+1}(x)=0$. Hence $x\in M'$, whence also $f(x)\in M'$, and we already showed that this yields $f^v(x)=0$.

In particular, $f^v$ is reductive, whence satisfies the \rknull\ by \Prop{P:rknullred}.
\end{proof} 

So, for a binary module $M$ of valence $v$, there is a more uniform way of defining the tectonics of an endomorphism $f$ by letting $\tec f$ be the submodule $\op{ker}(f^v)+f^v(M)$. Note that $\ndo M$ of a univalent module $M$ can be bigger than  $R/\ann {}M$ (in the terminology of \cite{VasEndo}, a univalent module  need not be balanced): consider as module the ideal $\maxim:=(x,y)$ in the one-dimensional domain $\pol K{x,y}/(x^2-y^3)$. Multiplication by $x/y$ is then an endomorphism of $\maxim$. Note that by \cite{VasEndo}, over a reduced ring, a module with commutative, reduced endomorphism ring is isomorphic to an ideal, so we cannot always expect commutativity for higher valence (but see \Thm{T:commendo} below). In fact, in general, due to lack of commutativity, the set of nilpotent endomorphism does not need to form an ideal. However, for binary modules, this is the case, and the nilpotents are precisely the obstruction to commutativity. To prove this, we first derive a general result:

\begin{lemma}\label{L:largekerid}
Given a module $M$,   the subset of $\ndo M$ of all endomorphisms with essential kernel is a two-sided ideal, and so is the subset  of all endomorphisms with open kernel.
\end{lemma}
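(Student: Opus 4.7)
The plan is to verify, for each of the two candidates, the three closure conditions of a two-sided ideal: closure under addition, left multiplication, and right multiplication by an arbitrary $h \in \ndo M$. Since $\ker(f+g) \supseteq \ker f \cap \ker g$, the sum case reduces to showing that the intersection of two submodules of the relevant type (essential, respectively open) is again of that type; and since $\ker(hf) \supseteq \ker f$, left multiplication follows immediately from the fact that any supermodule of an essential (respectively open) submodule is again essential (respectively open). The nontrivial closures are thus under intersection and under preimage by $h$, the latter because $\ker(fh) = h^{-1}(\ker f)$.

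For the essential-kernel case, I would argue that $\ker f \cap \ker g$ is essential by a standard iterated argument: given a nonzero $H \sub M$, essentiality of $\ker f$ gives $H \cap \ker f \neq 0$, and then essentiality of $\ker g$ applied to this submodule yields $H \cap \ker f \cap \ker g \neq 0$. For the preimage, given nonzero $H \sub M$: if $h(H) = 0$ then $H \sub \ker h \sub h^{-1}(\ker f)$; otherwise $h(H)$ is a nonzero submodule, so essentiality of $\ker f$ produces a nonzero $y \in h(H) \cap \ker f$, and any preimage $z \in H$ with $h(z) = y$ is a nonzero element of $H \cap h^{-1}(\ker f)$.

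For the open-kernel case I would first establish the local characterization: $N \sub M$ is open \iff\ $\fcyc N = \fcyc M$, \iff\ for every $\pr \in \op{Ass}(M)$, the maximal finite-length submodule $H_\pr$ of $M_\pr$ is contained in $N_\pr$. Since $N \sub M$ forces $\fcyc N \aleq \fcyc M$ coefficient-wise, and since by \Thm{T:cohrk} openness amounts to $\binord \fcyc N = \binord \fcyc M$, the first equivalence reduces to the observation that a nonnegative cycle whose shuffle sum vanishes must itself be zero. Granted this characterization, the closures are mechanical: if both $(N_1)_\pr$ and $(N_2)_\pr$ contain $H_\pr$ then so does their intersection $(N_1 \cap N_2)_\pr$; and since the image of $H_\pr$ under any endomorphism of $M_\pr$ is of finite length, hence contained in $H_\pr$ by maximality, we have $h_\pr(H_\pr) \sub H_\pr \sub (\ker f)_\pr$, whence $H_\pr \sub h_\pr^{-1}((\ker f)_\pr) = (h^{-1}(\ker f))_\pr$.

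The main obstacle will be establishing the local characterization of openness. A naive semi-additivity argument on $0 \to N \to M \to M/N \to 0$ shows only that $\len N$ matches $\len M$ in its leading Cantor coefficient, which is strictly weaker than openness -- lower coefficients may fail to match, a phenomenon already reflected in \Rem{R:strictineq}. It is \Thm{T:cohrk} that will provide the necessary leverage, by converting the ordinal equality $\len N = \len M$ into the more rigid cycle equality $\fcyc N = \fcyc M$, an inherently local condition manifestly stable under intersection and under preimage by endomorphisms.
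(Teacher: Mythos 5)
Your proposal is correct, and the overall decomposition — closure of the kernel under sum via $\ker(f+g)\supseteq\ker f\cap\ker g$, under left composition via $\ker(hf)\supseteq\ker f$, and under right composition via $\ker(fh)=h^{-1}(\ker f)$ — is exactly what the paper does. Your essential-case argument is also the paper's (for the preimage step the paper picks $x\neq 0$, takes a non-zero multiple $sh(x)\in\ker f$, and concludes $sx\in\ker(fh)$; same idea, same cases). Where you diverge is the open-kernel case: the paper simply invokes its continuity theorem \cite[Theorem 5.6]{SchOrdLen} to conclude that $h^{-1}(K)$ is open whenever $K$ is. You instead develop a local characterization of openness — $N\sub M$ is open \iff\ $\fcyc N=\fcyc M$ \iff\ $H_\pr\sub N_\pr$ for every $\pr\in\op{Ass}(M)$, where $H_\pr$ is the maximal finite-length submodule of $M_\pr$ — and then read off both the intersection and the preimage stability directly. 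Your derivation of that characterization is sound: left-exactness of finitistic length gives $\fcyc N\aleq\fcyc M$ entry-wise, $\binord{\cdot}$ is additive for effective cycles, and \Thm{T:cohrk} turns the ordinal equality $\len N=\len M$ into the cycle equality $\fcyc N=\fcyc M$, which is then equivalent to the local containment condition. In effect you re-prove the special case of the continuity theorem needed here, making the argument self-contained at the cost of a short extra lemma; the paper's version is shorter because it treats continuity as a black box. Both routes are valid.
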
 
\begin{proof}
We will prove both cases simultaneously (which for binary modules of course coincide by \Prop{P:largebin}). Let $\mathfrak L_M$ be the set of endomorphisms with essential (respectively, open) kernel, and let $f,g\in\mathfrak L_M$. Since the kernel of $f+g$ contains the essential (open) submodule $\op{ker}(f)\cap \op{ker}(g)$, it lies again in $\mathfrak L_M$, showing that $\mathfrak L_M$ is closed under sums. Let  $h\in \ndo M$ be arbitrary and put $K:=\op{ker}(f)$. Since the kernel of $hf$ contains $K$, we have $hf\in \mathfrak L_M$, showing that $\mathfrak L_M$ is a right ideal. To prove it is also a left ideal, we must show that $fh$ has a essential (respectively, open) kernel. Assume first that $K$ is essential, and let $x\neq 0$ be arbitrary. We need to show that some non-zero multiple of $x$ lies in the kernel of $fh$. There is nothing to do if $h(x)$ is zero, so assume it is not. Since $K$ is essential, it must contain some non-zero multiple $sh(x)$. In particular, $sx$ is a non-zero element in the kernel of $fh$, as required. If $K$ is open, then so is $\inverse hK$ by continuity (\cite[Theorem 5.6]{SchOrdLen}), and the latter lies inside the kernel of $fh$.
\end{proof}

\begin{theorem}\label{T:binendo}
Let $M$ be a binary module of valence $v$. An endomorphism $f$ on $M$ is nilpotent \iff\ its kernel is open \iff\ $f^v=0$. The set of  all nilpotent endomorphisms on   $M$ is a two-sided ideal $\mathfrak N$, which is nilpotent, whenever $M$ has no $\zet$-torsion. Moreover, $\ndo M/\mathfrak N$ is commutative.
\end{theorem}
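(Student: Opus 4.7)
The plan is first to establish the three equivalences, then to read off the two-sided ideal property of $\mathfrak N$ from \Lem{L:largekerid}, and finally to build an invariant filtration with univalent subquotients from which both the commutativity of $\ndo M/\mathfrak N$ and the nilpotency of $\mathfrak N$ drop out uniformly.

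For the equivalences, I would extend the stabilisation statement in \Cor{C:valred} by a short induction to $\op{ker}(f^v)=\op{ker}(f^n)$ for all $n\geq v$; then any relation $f^n=0$ forces $\op{ker}(f^v)=M$, giving ``nilpotent $\Rightarrow f^v=0$''. For the key implication ``$f^v=0 \Rightarrow \op{ker}(f)$ open'' I would argue by contradiction, using \Prop{P:openbin}: if $\op{ker}(f)$ is not open, some associated prime $\pr$ of $M$ is absent from $\op{Ass}(\op{ker}(f))$, and picking $x$ with $\ann{}{x}=\pr$ yields $Rx\cap\op{ker}(f)=0$ since every nonzero multiple of $x$ has annihilator $\pr$. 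Hence $f$ is injective on $Rx$ and $f(Rx)\iso R/\pr$; the same disjointness reasoning shows $f(Rx)\cap\op{ker}(f)=0$, and iterating gives $f^i(Rx)\iso R/\pr$ with $f^v$ injective on $Rx$, contradicting $f^v(x)=0$. For the converse ``$\op{ker}(f)$ open $\Rightarrow f^v=0$'', note that $\op{ker}(f)\sub\op{ker}(f^v)$ is already open, and the \rknull\ for $f^v$ from \Cor{C:valred} forces $\len{f^v(M)}=0$, whence $f^v=0$.

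Once the equivalences are in place, the identification of $\mathfrak N$ with the set of endomorphisms of open kernel makes $\mathfrak N$ a two-sided ideal by \Lem{L:largekerid}. For the remaining two claims I would construct a filtration $0=M_v\sub M_{v-1}\sub\dots\sub M_0=M$ by picking, at each stage, a \emph{minimal} associated prime $\pr_i$ of $M_{i-1}$ and setting $M_i:=\prim{\pr_i}{M_{i-1}}$, the kernel of localisation. By \Lem{L:primmin} the quotient $M_{i-1}/M_i$ is binary and its associated primes are those of $M_{i-1}$ contained in $\pr_i$; minimality of $\pr_i$ leaves only $\pr_i$ itself, so each $M_{i-1}/M_i$ is univalent. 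Each $M_i$ is moreover invariant under every element of $\ndo M$, because localisation commutes with $R$-linear maps. By \Cor{C:endouni} the induced action of $\ndo M$ on $M_{i-1}/M_i$ lands in a subring of a residue field, hence in a commutative domain.

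Commutativity of $\ndo M/\mathfrak N$ and nilpotency of $\mathfrak N$ now follow uniformly from this filtration. For $f,g\in\ndo M$ the commutator $[f,g]$ induces zero on each univalent quotient, so $[f,g](M_{i-1})\sub M_i$ and telescoping yields $[f,g]^v(M)\sub M_v=0$; hence $[f,g]\in\mathfrak N$. Likewise, any $f\in\mathfrak N$ induces a nilpotent element on each univalent quotient, which must vanish since a subring of a field has no nonzero nilpotents, giving $f(M_{i-1})\sub M_i$. Composing any $v$ elements of $\mathfrak N$ therefore produces the zero map, so $\mathfrak N^v=0$---in fact without any $\zet$-torsion hypothesis. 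The main obstacle I anticipate is the iterated-injectivity step in the second equivalence, where one must track that each successive copy $f^i(Rx)\iso R/\pr$ stays disjoint from $\op{ker}(f)$; this relies delicately on the observation that this copy's unique associated prime is the original $\pr$, which by assumption lies outside $\op{Ass}(\op{ker}(f))$.
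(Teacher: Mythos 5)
Your proof is correct, and it deviates from the paper's argument in two substantive and worthwhile ways.

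First, for the implication that a nilpotent endomorphism has open kernel, the paper inducts on the valence $v$ using the decomposition $0\to\prim\pr M\to M\to M/\prim\pr M\to 0$ with $\pr$ a maximal-dimensional associated prime, showing $\pr\in\op{Ass}(\op{ker}(f))$ by a localization trick ($sf(M)=0$ for some $s\notin\pr$). Your direct argument --- pick $\pr\in\op{Ass}(M)\setminus\op{Ass}(\op{ker}(f))$, take $x$ with $\ann{}{x}=\pr$, and show $f^i(x)\neq0$ for all $i$ because each $f^i(Rx)\iso R/\pr$ stays disjoint from $\op{ker}(f)$ --- is non-inductive and a bit more transparent. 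One should just record explicitly, as you note at the end, that the induction on $i$ relies on each $f^i(Rx)$ having $\pr$ as its unique associated prime; this is fine since $f$ is injective on $f^{i-1}(Rx)\iso R/\pr$.

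Second, and more significantly, your filtration $0=M_v\sub\dots\sub M_0=M$ obtained by iterating $M_i=\prim{\pr_i}{M_{i-1}}$ for a \emph{minimal} associated prime $\pr_i$ of $M_{i-1}$ (valid by \Lem{L:primmin} and \Cor{C:subbin}, with invariance following from the functoriality of localization) is invariant with univalent subquotients, and this single device handles both the commutativity of $\ndo M/\mathfrak N$ and the nilpotency of $\mathfrak N$. The paper instead proves commutativity by a separate induction on valence (showing $h^{2n}=0$ for a commutator $h$) and obtains nilpotency of $\mathfrak N$ by invoking the Nagata--Higman theorem, which is where the hypothesis of no $\zet$-torsion enters. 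Your argument shows directly that any nilpotent $f$ satisfies $f(M_{i-1})\sub M_i$ (because the induced endomorphism on the univalent quotient $M_{i-1}/M_i$ is a nilpotent element of a subring of a field by \Cor{C:endouni}, hence zero), and likewise $[f,g](M_{i-1})\sub M_i$ for any $f,g$. Telescoping gives $\mathfrak N^v=0$ and $[f,g]^v=0$ with no torsion hypothesis whatsoever --- so your proof actually \emph{strengthens} the theorem by removing the $\zet$-torsion clause and giving the explicit bound $\mathfrak N^v=0$. (The filtration idea is in the spirit of the paper's proof of \Thm{T:nilpow}, which uses a coarser filtration of length $w\leq v$; that version would sharpen your bound to $\mathfrak N^w=0$.) In short: the approach is recognizably the same decomposition-by-$\prim\pr{}$ the paper uses, but unrolling it into a filtration is cleaner, unifies two inductions, and eliminates an unnecessary hypothesis.
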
 
\begin{proof}
By \Cor{C:valred}, an endomorphism is nilpotent \iff\ its $v$-th power is zero. 
Let $K$ be the kernel of $f$. If $K$ is open, $f$ is nilpotent by   \Rem{R:openkernel}. To prove the converse, we prove by induction on $v$ that if $f$ is nilpotent, then its kernel $K$ is open. The case $v=1$ is trivially satisfied by \Cor{C:endouni}, and so we may assume $v>1$. 
%and put $g:=f^{q-1}$. 
Let $\pr$ be an associated prime ideal of $M$ of maximal dimension. Put $M':=\prim\pr M$, which is a binary module of valence $v-1$ by \Lem{L:primmin}. As $M'$ is invariant, the restriction $f'$ of $f$ to $M'$ is a nilpotent endomorphism of $M'$, so that $K\cap M'$ is open by induction. In particular, since $M'$ has the same associated primes as $M$ except for $\pr$, the fundamental cycles   $\fcyc {M'}$  and $\fcyc K$ must be the same except possibly at $\pr$. Therefore, we get $\fcyc K=\fcyc M$ once we show that $\pr$ is an associated prime of $K$. The endomorphism $\bar f$ induced by $f$ on $M/M'$, being again nilpotent, must be zero, since $M/M'$ is univalent by \Lem{L:primmin}.  This means that $f(M)\sub M'$. Hence, for some $s\notin\pr$, we have $sf(M)=0$, showing that $sM\sub K$. Let $x\in M$ be such that $\ann{}x=\pr$. Since $sx\neq0$, it too has annihilator equal to $\pr$, and as it belongs to $sM\sub K$, we showed that $\pr$ is an associated prime of $K$.

% In particular, the kernel of a nilpotent  endomorphism is essential by \Prop{P:largebin}. 
 By \Lem{L:largekerid}, the nilpotents  form  a two-sided ideal $\mathfrak N$. If $M$ has no $\zet$-torsion, then the ideal is nilpotent by the Nagata-Higman Theorem (\cite{HigNag,NagNil}). Finally, again by induction on   $v$, let us show that some power of a commutator $h:=fg-gf$ is zero, where the case $v=1$ is immediate by \Cor{C:endouni}. Applying our induction hypothesis   to the induced endomorphisms $h'$ and $\bar h$ on $M'$ and $M/M'$ respectively (keeping the notation from supra), we can choose $n$ big enough so that $h^n$ is zero on    $M'$,  and has image inside $M'$. Therefore, $h^{2n}=0$.
%By \Thm{T:submod}, this forces the kernel of $h^n$ to be open.
\end{proof} 
%
%\begin{proof}[Alternative proof]
%As in the previous proof, we need to show, by induction on the valence $v$, that the kernel $K$ of the nilpotent endomorphism $f$ is open. Let $\pr$ be the associated prime of   dimension $d=\op{dim}(M)$. By \Lem{L:primmin},   the $\pr$-primary component $N$    of $(0)$ is binary of valence $v-1$. Since it is the kernel of $M\to M_\pr$, it is easily seen to be invariant, so that the restriction of $f$ to $N$ is again a nilpotent endomorphism.     By induction, therefore, $K\cap N$ is open, so that $\mu'\leq\len K\aleq\mu$ by \Thm{T:submod}, where $\mu:=\len M=\omega^d+\mu'$. So $K$ will be open by \Prop{P:lensub}, if we can show that it contains an element of dimension $d$. Choose $x\in M$ with  $\pr=\ann {}x$,  so that in particular $\op{dim}(x)=d$.  Since $M/N$ is univalent by \Lem{L:primmin}, the endomorphism induced by $f$ on $M/N$ must be zero, that is to say, $f(M)\sub N$.   In particular, $f(x)$, being in $ N$, must be annihilated by some $s\notin\pr$. Hence $s(fx)=f(sx)$, so that $sx\in K$. If $I=\ann {}{sx}$, then $sIx=0$ implies $sI\sub \pr$, whence $I\sub \pr$. Since clearly $\pr\sub I$, we see that $sx$ has dimension $d$ as well, as we needed to show. 
%\comment{This is not really a new proof since $N=\fl D_{d-1}(M)$.}
%\end{proof} 

\begin{remark}\label{R:NagHig}
Let $w$ be the least power such that $f^w=0$ for each nilpotent endomorphism $f$. By the above, $w$ is at most the valence $v$, but we will improve this upperbound  in \Thm{T:nilpow} below.  
To apply the Nagata-Higman theorem, it suffices to assume that $M$ has no $w!$-torsion.
Razmyslov's proof of the Nagata-Higman Theorem gives $w^2$ as an upperbound for the nilpotency of $\mathfrak N$, but the exact value is not known in general (and is conjectured by Kuzmin to be $w(w+1)/2$; see \cite{LeeNag} for a discussion and an easy proof of the Nagata-Higman theorem). If $w=2$, one easily sees that the nilpotency is at most three: Let $f,g,h\in\mathfrak N$. Since $(f+g)^2=0$, we get $fg+gf=0$, proving that any two nilpotent endomorphisms anti-commute. Applied to $f$ and $gh$, we get $fgh=-ghf$. Using now the anti-communication of $f$ and $h$, and then that of $f$ and $g$, we get 
$$
fgh=-g(hf)=g(fh)=(gf)h=-(fg)h
$$
and hence $2fgh=0$. Since $M$ has no $2$-torsion, $fgh=0$, as we wanted to show.
\end{remark} 
%This will follow if we can prove that an endomorphism is nilpotent \iff\ its kernel is open. Indeed, assuming the claim, the proof that $\mathfrak N$ is an ideal is then the same as above. 

\begin{corollary}\label{C:monicnil}
On a binary module, the sum of a monic and a nilpotent endomorphism is again monic.
\end{corollary}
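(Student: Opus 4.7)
The plan is to induct on the valence $v$ of $M$. For the base case $v=1$, the module $M$ is univalent and \Cor{C:endouni} embeds $\ndo M$ into the residue field of the unique associated prime, so there are no nonzero nilpotent endomorphisms; hence $g=0$, and $f+g=f$ is monic by hypothesis.

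For the inductive step, I would pick an associated prime $\pr$ of $M$ of maximal dimension and set $M':=\prim\pr M$. By \Lem{L:primmin}, $M'$ is binary of valence $v-1$ and $M/M'$ is univalent. Since $M'$ is the kernel of localization at $\pr$, a functorial construction, every endomorphism of $M$ restricts to $M'$ and descends to $M/M'$; write $f',g'$ for the restrictions and $\bar f,\bar g$ for the induced maps. Then $f'$ is monic on $M'$, while $g'$ and $\bar g$ are nilpotent. Since $\ndo{M/M'}$ is a domain by \Cor{C:endouni}, the nilpotent $\bar g$ must vanish.

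The key step is to show $\bar f$ is injective on $M/M'$. If $\bar f$ were zero, then $f(M)\sub M'$; monicity of $f$ forces $\len{f(M)}=\len M$, so \Thm{T:submod} gives $\len M\aleq \len{M'}$. But \Lem{L:primmin} yields $\len M=\len{M'}\ssum\len{M/M'}$ with $\len{M/M'}\neq 0$, so $\len M\not\aleq\len{M'}$, a contradiction. Hence $\bar f$ is a nonzero element of the domain $\ndo{M/M'}$; by the description of univalent endomorphism rings (\Thm{T:unival} together with the proof of \Cor{C:endouni}), $\bar f$ acts as multiplication by a nonzero element of the fraction field, and is therefore injective on $M/M'$.

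Finally, suppose $(f+g)(x)=0$. Reducing modulo $M'$ gives $\bar f(\bar x)+\bar g(\bar x)=\bar f(\bar x)=0$, so $\bar x=0$, meaning $x\in M'$. Restricting to $M'$ then yields $(f'+g')(x)=0$, and the inductive hypothesis applied to the monic $f'$ and nilpotent $g'$ on the valence $v-1$ binary module $M'$ gives $x=0$. The main obstacle is the length argument forcing $\bar f\neq 0$; once that is in hand the inductive descent along the filtration $M'\sub M$ is straightforward.
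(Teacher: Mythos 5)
Your proof is correct, but it takes a genuinely different route from the paper's. The paper's argument is a short direct deduction from \Thm{T:binendo}: since $M$ is binary, the kernel of a nilpotent endomorphism is open, hence essential by \Cor{C:bigbg}; if $f+g$ had a nonzero kernel it would meet $\op{ker}(g)$ in a nonzero element $x$, which would then also lie in $\op{ker}(f)$, contradicting that $f$ is monic. You instead run an induction on the valence, descending along the invariant submodule $M':=\prim\pr M$ for $\pr$ of maximal dimension, killing $\bar g$ on the univalent quotient via \Cor{C:endouni}, forcing $\bar f\neq 0$ by a length comparison from \Thm{T:submod} and \Lem{L:primmin}, and then pushing the problem into $M'$ where the inductive hypothesis applies. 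In effect you are re-running the induction that underlies the proof of \Thm{T:binendo} itself, specialized to this corollary, rather than invoking the theorem. What your route buys is independence from \Thm{T:binendo} (you only need \Lem{L:primmin}, \Cor{C:endouni}, \Thm{T:submod}, and \Thm{T:unival}); what it costs is length and the extra care needed to verify that a nonzero endomorphism of a univalent module is injective and that $\bar f$ cannot vanish. Both arguments are sound; the paper's is shorter precisely because it harvests the payoff of \Thm{T:binendo} instead of reproving it.
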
 
\begin{proof}
Let $M$ be binary, and let  $f,h\in\ndo M$ with $h$ monic and $f$ nilpotent. We need to show that $g:=h+f$ is monic, so assume not. By \Thm{T:binendo}, the kernel of $f$ is open, whence essential by \Cor{C:bigbg}, and  so there exists a non-zero element $x$ in $\op{ker}(f)\cap \op{ker}(g)$. However, then also $h(x)=0$, contradiction. 
%
%Since $\fl C(M)$ is open by \Cor{C:bincommdom}, it is , and hence we can find a non-zero element $x\in \fl C(M)$ belonging to the kernel of $g$. Let $g':=h^{n-1}+h^{n-2}f+\dots+hf^{n-2}+f^{n-1}$. Since $f^n=0$, we have $g'g= h^n+w$, with $w$ a sum of commutators in $\ndo M$. By definition, $w(x)=0$, and since also $g(x)=0$, we get $h^n(x)=0$, contradicting that $h$ was monic.
\end{proof} 
\begin{remark}\label{R:epicnil}
I do not know whether the same holds true for epic endomorphisms (note that an epic endomorphism is in fact an automorphisms). We do have that if $u$ is an automorphism which is central in $\ndo M$ (e.g., given by multiplication with a unit of $R$), then $u+f$ is again an automorphism whenver $f$ is nilpotent: simply multiply with $g':=u^{n-1}+u^{n-2}f+\dots+uf^{n-2}+f^{n-1}$, to get $g'\after (u-f)=u^n$, where $n$ is such that $f^n=0$. 
\end{remark} 

\begin{example}\label{E:strictineq}
  Let $R=\pol K{x,y}/(x^2,xy)$ be the bivalent ring of length $\omega+1$ from \Rem{R:strictineq}, and let $M$ be its maximal ideal, of the same length. One checks that an endomorphism $f$ on $M$ is uniquely determined by a triple $(u,v,p)$ with $u,v\in K$ and $p\in \pol Ky$, given by $f(x):=ux$ and $f(y):=vx+py$. Among these, the nilpotent ones must have $u=p=0$, and the bijective ones  are given by the conditions $u\neq0$ and $p(0)\neq 0$. It follows that $\ndo M$ is not commutative, but the sum of a nilpotent and automorphism is again an automorphism. The kernel of any non-zero nilpotent endomorphism is  $(x,y^2)$.
%  , and so the latter is equal to the reduction domain $\fl R(M)$ (see below).  
  \end{example} 
  
  Let us call a module \emph{semi-prime} if its annihilator is radical.

\begin{theorem}\label{T:commendo}
If $M$ is a binary $R$-module without embedded primes, then it is semi-prime and isomorphic to an ideal of $R/\ann{}M$. In particular, $\ndo M$ is a commutative, reduced ring.
\end{theorem}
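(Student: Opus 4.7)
The plan is to exploit the primary decomposition of $0$ in $M$ together with Lemma~\ref{L:primmin} and Corollary~\ref{C:endouni}. Enumerate the associated primes as $\pr_1,\dots,\pr_v$; by hypothesis they are pairwise incomparable, hence all minimal. Set $N_i:=\prim{\pr_i}M$, the kernel of $M\to M_{\pr_i}$. By Lemma~\ref{L:primmin} each quotient $M/N_i$ is univalent with sole associated prime $\pr_i$, and, in the absence of embedded components, the standard primary decomposition yields $\bigcap_i N_i=0$, giving an embedding $M\hookrightarrow\bigoplus_i M/N_i$. Binarity forces $\len{M_{\pr_i}}=1$, so $M_{\pr_i}$ is the residue field $\op{Frac}(R/\pr_i)$; in particular $\pr_i$ kills $M_{\pr_i}$, and hence also its submodule $M/N_i$. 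Thus $\pr_iM\subseteq N_i$ for every $i$, so $(\bigcap_i\pr_i)M=0$; combined with the automatic inclusion $\ann{}M\subseteq\bigcap_i\pr_i$ this gives $\ann{}M=\bigcap_i\pr_i$, which is radical. Hence $M$ is semi-prime.

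Next I would show that the nilpotent ideal $\mathfrak N$ from Theorem~\ref{T:binendo} is zero; in view of that theorem, this upgrades the commutativity of $\ndo M/\mathfrak N$ to commutativity and reducedness of $\ndo M$ itself. Each $N_i=\ker(M\to M_{\pr_i})$ is invariant under every $f\in\ndo M$ by functoriality of localization, so $f$ descends to an endomorphism $\bar f_i\in\ndo{M/N_i}$. By Corollary~\ref{C:endouni}, $\ndo{M/N_i}$ embeds into the field $\op{Frac}(R/\pr_i)$, so a nilpotent $\bar f_i$ must vanish. If $f$ itself is nilpotent, then all the $\bar f_i$ vanish, meaning $f(M)\subseteq N_i$ for every $i$; intersecting gives $f(M)\subseteq\bigcap_i N_i=0$, so $f=0$.

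Having shown that $R/\ann{}M$ is reduced and that $\ndo M$ is commutative and reduced, the final step is to invoke the theorem of Vasconcelos (\cite{VasEndo}) to realize $M$ as an ideal of $R/\ann{}M$. The conceptual heart of the argument is the vanishing of $\mathfrak N$, which couples the invariance of the $N_i$ with the field-like endomorphism ring of each univalent quotient; I expect this to be the main obstacle, though it is mild given the tools already developed. Everything else — semi-primeness and the final embedding — is then largely formal.
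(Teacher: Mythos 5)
Your proof is correct, and its overall architecture matches the paper's: primary decomposition $(0)=\bigcap_i\prim{\pr_i}M$, the inclusion $\pr_iM\subseteq\prim{\pr_i}M$ to get $\ann{}M=\bigcap_i\pr_i$ (semi-primeness), vanishing of nilpotent endomorphisms via \Lem{L:primmin} and \Cor{C:endouni}, then Vasconcelos to realize $M$ as an ideal. Where you differ is in the key step showing $\mathfrak N=0$. The paper inducts on the valence $v$: it restricts a nilpotent $f$ to each $\pr_iM$ (a submodule of $\prim{\pr_i}M$ of smaller valence, still binary without embedded primes), concludes $f$ kills $\pr_iM$, and then argues that $\ann{}{f(x)}$ contains every $\pr_i$, which is impossible for a proper annihilator since the associated primes are pairwise incomparable. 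You instead exploit the embedding $M\hookrightarrow\bigoplus_i M/N_i$ directly: each $N_i$ is invariant under $f$ by functoriality of localization, each quotient $M/N_i$ is univalent with $\ndo{M/N_i}$ a subring of a field (\Cor{C:endouni}), so the induced $\bar f_i$ is a nilpotent element of a domain, hence zero, forcing $f(M)\subseteq\bigcap_i N_i=0$. Your route avoids the induction entirely and sidesteps the need to verify that the submodules $\pr_iM$ themselves satisfy the hypotheses; it is arguably the cleaner of the two. Both rely on the same toolkit and deliver the same conclusion, so this is a streamlining rather than a conceptually different proof.
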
 
\begin{proof}
Let $\pr_i$, for $i=\range 1v$, be the associated primes of $M$. By assumption, they are all minimal,   so that we have a primary decomposition
$$
(0)=\prim{\pr_1}M\cap \dots\cap \prim{\pr_v}M
$$
(see, for instance, \cite[Theorem 6.8]{Mats}). By \Rem{R:prM}, we have an inclusion $\pr_iM\sub\prim{\pr_i}M$, for each $i$, and so we get
\begin{equation}\label{eq:primdec}
(0)=\pr_1 M\cap \dots\cap \pr_vM. 
\end{equation} 
As always $\ann{}M\sub \pr_1\cap \dots\cap \pr_v$, and as the opposite inclusion follows   from \eqref{eq:primdec}, we see   that $M$ is semi-prime.
Let us next show, by induction on the valence $v$ of $M$,  that $\ndo M$ has no nilpotent elements. The case $v=1$ is immediate from \Cor{C:endouni}, so we may assume $v>1$. Since   $\pr_iM$ is a submodule of $ \prim{\pr_i}M$, which itself has  valence strictly less than $v$ by \Lem{L:primmin}, we may apply our induction hypothesis to the restriction of $f$ to $\pr_iM$, showing that $f$ is zero on $\pr_iM$. Fix an arbitrary $x\in M$. By what we just proved, $\pr_i\sub \ann{}{f(x)}$, for all $i$. However, since any proper annihilator ideal is contained in some associated prime, and since there are no inclusion relations among the associated primes, we must have $f(x)=0$, proving the claim.  By \Thm{T:binendo}, therefore, $\ndo M$ is commutative. By the main result in \cite{VasEndo}, we then obtain that $M$ is isomorphic to an ideal of the reduced ring $R/\ann{}M$.  
%
%
%
%Let $d$ be the dimension of $M$, let $\pr$ be the associated prime of dimension $d$,  and let $M':=\fl D_{d-1}(M)$, and let $f$ be some nilpotent endomorphism. Since $M'$ is invariant the restriction of $f$ to $M'$ is again nilpotent, so that by induction, it is zero on $M'$. We showed in the proof of \Prop{P:truncass}, that $\pr M$ is univalent, of length $\omega^d$. By \Cor{C:restnil}, the restriction of $f$ to $\pr M$ is therefore also zero. By induction, $\ann {}{M'}$ is radical. 
\end{proof}

Let us call a module $M$ \emph{\topc}, if it embeds in any of its open submodules. For trivial reasons,   any module of finite length is \topc\ (as there are no non-trivial open submodules). A reduced ring $R$ is \topc. Indeed, if $I$ is an open ideal, then it must have the same associated primes as $R$. For each such associated prime $\pr_i$, choose $x_i\in I$ with annihilator $\pr_i$. It follows that $(x_1,\dots,x_v)$ is split binary, that is to say, isomorphic to $R/\pr_1\oplus \dots\oplus R/\pr_v$. In particular, the annihilator of $x_1+\dots+x_v$ is $\pr_1\cap \dots\cap\pr_v=(0)$, showing that $R\iso Rx\sub I$. 

\begin{lemma}\label{L:alminvtopc}
Any open, and more generally, any almost invariant submodule, of a binary \topc\ module is again \topc.
\end{lemma}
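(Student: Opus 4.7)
The plan is to dispose of the invariant case first; the general almost-invariant case will then follow by an immediate reduction. So suppose first that $N\subseteq M$ is invariant, and let $U\subseteq N$ be open in $N$, i.e.\ $\len U=\len N$. The goal is to produce a monomorphism $N\hookrightarrow U$.

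First I would apply \Prop{P:binqcomp} to the binary module $M$ to obtain a quasi-complement $V$ of $N$ in $M$: that is, $N\cap V=0$ and $N+V$ is open in $M$. Because $U\subseteq N$, the sum $U+V$ is internally direct, so by \Thm{T:semadd} applied to the split sequences involved,
$$
\len{U+V}=\len U\ssum\len V=\len N\ssum\len V=\len{N+V}=\len M,
$$
and hence $U+V$ is itself open in $M$. By top-compressibility of $M$ I now obtain an embedding $\phi\colon M\hookrightarrow U+V$. Restricting $\phi$ to $N$: invariance forces $\phi(N)\subseteq N$, while by construction $\phi(N)\subseteq U+V$, so $\phi(N)\subseteq N\cap(U+V)$. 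The modular identity (using $U\subseteq N$) together with $N\cap V=0$ gives
$$
N\cap(U+V)=U+(N\cap V)=U,
$$
so $\phi|_N$ lands in $U$ and is the desired embedding.

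For the general case, $N$ is open in some invariant submodule $N'\subseteq M$. The invariant case just treated applies to $N'$, so $N'$ is top-compressible. If now $U\subseteq N$ is any open submodule of $N$, then $\len U=\len N=\len{N'}$, so $U$ is also open in $N'$; the embedding $N'\hookrightarrow U$ furnished by top-compressibility of $N'$ restricts to an embedding $N\hookrightarrow U$, as required.

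The only real obstacle is the bootstrapping step: turning an open submodule of $N$ into an open submodule of $M$, which is done by fattening $U$ with the quasi-complement $V$. Once that is in place, the invariance of $N$ combines with the disjointness $N\cap V=0$ via the modular law to force the image of $N$ under any compression of $M$ into $U+V$ to lie inside $U$; everything else is routine.
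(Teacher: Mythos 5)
Your proof is correct and takes essentially the same route as the paper's: quasi-complement plus top-compressibility of $M$ plus modularity to drive the image of $N$ into $U$. The one small variation is worth flagging: you take the quasi-complement $V$ of $N$ itself, which makes $N\cap V=0$ automatic but requires a short split-semi-additivity computation to see that $U+V$ is open; the paper instead takes the quasi-complement $W'$ of the small open submodule $W=U$, which makes $U+W'$ open by construction but then needs the observation that $N\cap W'=0$ (via essentiality of $W$ in $N$). Both bookkeeping choices are equally valid, and your ordering of the cases (invariant first, then the open/almost-invariant reduction) is a cosmetic rearrangement of the paper's.
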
 
\begin{proof}
Let $M$ be \topc. Suppose first that $N\sub M$ is open (whence almost invariant). If $W\sub N$ is open, then $W$ is open in $M$ and hence $M$ embeds in it, whence so does $N$. This proves that $N$ is \topc. So we reduced to proving the result in case $N$ is invariant. Let $W$ be open in $N$. By \Prop{P:binqcomp}, we can find $W'\sub M$ with $W\cap W'=0$ and $U:=W+W'$ open in $M$.  By assumption, there exists an endomorphism $f\in\ndo M$ with $f(M)\sub U$. Since $N$ is invariant, $f(N)$ lies in $N$, whence in $N\cap U$. One easily checks that the latter is equal to $W$. 
\end{proof} 

\begin{corollary}\label{C:binembtopc}
A binary module $M$ without embedded primes is \topc.
\end{corollary}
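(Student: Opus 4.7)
The plan is to reduce to the preceding paragraph's observation that reduced rings are top-compressible over themselves, and then to invoke \Lem{L:alminvtopc}. By \Thm{T:commendo}, since $M$ is binary and has no embedded primes, $M$ is semi-prime and, letting $S:=R/\ann{}M$, there is an embedding $M\hookrightarrow S$ identifying $M$ with an ideal of the reduced ring $S$. The discussion just before \Lem{L:alminvtopc} already establishes that every reduced ring is top-compressible as a module over itself, and reduced rings are binary. So $S$ is a binary, top-compressible module.

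Next I would observe that $M$, viewed as an $S$-submodule of $S$, is invariant: any $S$-linear endomorphism of $S$ is given by multiplication by some $s\in S$, so it sends the ideal $M\sub S$ into itself. Since $\ann{}M$ acts trivially on $M$, the lattice of $R$-submodules of $M$ coincides with the lattice of $S$-submodules, so the notions of length, open submodule, canonical topology, binary, invariant, and top-compressible are insensitive to whether we work over $R$ or over $S$. In particular, calling $M$ top-compressible as an $R$-module is the same as calling it top-compressible as an $S$-module.

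Now I would apply \Lem{L:alminvtopc} to the invariant submodule $M\sub S$ of the binary, top-compressible module $S$: the conclusion is that $M$ is top-compressible, which is what we wanted.

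The only real subtlety is the base-change argument: one has to be sure that all the relevant structure (binary, invariance, openness, top-compressibility) transfers transparently between $R$ and $S=R/\ann{}M$. This is automatic because these properties are phrased purely in terms of the submodule lattice together with its dimension/length data, and $\ann{}M$ acts as zero on everything in sight. Granting this bookkeeping, the proof is essentially a one-line application of \Thm{T:commendo} and \Lem{L:alminvtopc}.
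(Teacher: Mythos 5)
Your proof is correct and uses the same two main ingredients as the paper: \Thm{T:commendo} to realize $M$ as an ideal of the reduced ring $S := R/\ann{}M$, and \Lem{L:alminvtopc} applied to the binary, top-compressible module $S$. The only divergence is which hypothesis of \Lem{L:alminvtopc} you choose to verify. The paper shows that the ideal is \emph{open} in $S$: since $M$ has no embedded primes and is faithful over $S$, one has $\op{Ass}(M) = \op{Min}(S) = \op{Ass}(S)$, and then both have the same length by \eqref{eq:lensub}. You instead observe that the ideal is \emph{invariant} in $S$: as $\ndo S = S$ acts by multiplication, every endomorphism of $S$ preserves every ideal. Your observation is the more automatic one, requiring no comparison of associated primes, but it routes you through the less elementary half of \Lem{L:alminvtopc}'s proof (the invariant case invokes \Prop{P:binqcomp}), whereas the paper's openness claim, once established, only needs the trivial ``open'' half. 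Both routes are short and correct. Your base-change remark about transporting the submodule lattice between $R$ and $S$ is exactly what the paper compresses into ``we may assume $M$ is faithful over $R$,'' so it is bookkeeping rather than a gap.
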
 
\begin{proof}
We may assume $M$ is faithful over $R$.   
  \Thm{T:commendo} shows that  $R$ is reduced and $M$ is isomorphic to an  ideal $I$ of $R$. Since $R$ and $M$ have the same associated primes and both are binary, they have the same length, showing that $I$ is open. Since reduced rings are \topc,  we are done by \Lem{L:alminvtopc}. 
\end{proof} 

\begin{remark}\label{R:embnontopc}
If there are embedded primes, the result might be false: for instance, if $(R,\maxim)$ is a local ring of depth zero (like the binary ring in \Rem{R:strictineq}), then $\maxim$ is open, but  since it has non-zero annihilator, it cannot contain $R$. In fact, a binary ring is \topc\ \iff\ it has no embedded primes: one direction is \Cor{C:binembtopc}, whereas the converse follows from \Cor{C:maxassopen}: if there were an   embedded prime, take a maximal one, which is then open but has non-zero annihilator, so it can never contain $R$.
\end{remark}

\begin{theorem}\label{T:nilpow}
Let $M$ be a binary module an let $w$ be the length of $\op{Ass}(M)$ as a partially ordered set.   If $f\in\ndo M$ is nilpotent, then $f^w=0$.
\end{theorem}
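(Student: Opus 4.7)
The plan is to induct on $w$, pushing $f$ simultaneously into \emph{all} minimal associated primes at each stage (rather than peeling them off one by one as in \Thm{T:binendo}). The base case $w=1$ means $\op{Ass}(M)$ is an antichain, so $M$ has no embedded primes; by \Thm{T:commendo}, $\ndo M$ is then reduced, and any nilpotent $f$ vanishes, giving $f^1=0$.

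For $w\geq 2$, set
$$
N:=\bigcap_{\pr}\prim\pr M
$$
with $\pr$ ranging over the minimal elements of $\op{Ass}(M)$. For each such $\pr$, \Lem{L:primmin} says $M/\prim\pr M$ is univalent (its only associated prime being $\pr$), so by \Cor{C:endouni} its endomorphism ring embeds into a field; the induced nilpotent $\bar f$ therefore vanishes, giving $f(M)\subseteq\prim\pr M$ for every minimal $\pr$, and hence $f(M)\subseteq N$. In particular, $N$ is $f$-invariant, and binary by \Cor{C:subbin}. A direct check using \Lem{L:primmin} and a standard annihilator argument (for $x\in M$ with $\ann{}{x}=\pr$ non-minimal, any minimal $\pr_i$ satisfies $\pr\not\subseteq\pr_i$, so some $s\in\pr\setminus\pr_i$ kills $x$ off $\pr_i$, proving $x\in\prim{\pr_i}M$) identifies $\op{Ass}(N)$ as precisely the non-minimal primes of $\op{Ass}(M)$.

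The crucial combinatorial point is that $\op{Ass}(N)$ has poset-length exactly $w-1$: any longest chain in $\op{Ass}(M)$ must have its minimum among the minimal elements of $\op{Ass}(M)$---otherwise one could prepend a strictly smaller associated prime and violate maximality of $w$---so removing all minimal primes shortens longest chains by exactly one, and conversely any chain in $\op{Ass}(N)$ is a chain of non-minimal primes of $\op{Ass}(M)$ which can be extended downward by one. The inductive hypothesis applied to the nilpotent endomorphism $f|_N$ then yields $f^{w-1}|_N=0$, and combining with $f(M)\subseteq N$ gives
$$
f^w(M)=f^{w-1}(f(M))\subseteq f^{w-1}(N)=0,
$$
as required. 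The main obstacle is the combinatorial claim about chain-length together with the realization that one should handle all minimal primes at once; once those are in hand, the rest is a direct elaboration of the ideas already present in the proof of \Thm{T:binendo}.
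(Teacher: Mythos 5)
Your proposal is correct and is essentially the paper's own proof: both induct on $w$, both use \Thm{T:commendo} for the base case, both intersect $\prim\pr M$ over all minimal $\pr$ simultaneously, show $f(M)\sub N$ by killing the univalent quotients via \Cor{C:endouni}, identify $\op{Ass}(N)$ with the embedded primes, and descend to $N$. (The ``innovation'' you describe --- handling all minimal primes at once rather than one at a time as in \Thm{T:binendo} --- is precisely what the paper's proof of \Thm{T:nilpow} does.) The one place where you diverge slightly is in how $\op{Ass}(N)$ is identified: you give a direct annihilator argument (find $s\in\pr\setminus\pr_i$ killing a witness $x$ for each embedded $\pr$), whereas the paper invokes \Thm{T:latt} to compute $\op{Ass}(N)=\bigcap_i\op{Ass}(\prim{\pr_i}M)$. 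Your direct route is arguably cleaner here, since \Thm{T:latt} is stated only for modules of \emph{binary length}, while $N$ is merely binary; the paper's appeal really rests on the intermediate identity~\eqref{eq:assint} from \Thm{T:latt}'s proof, which does hold for binary modules, so the paper's argument is sound but the citation is a little loose. Your explicit treatment of the poset-length combinatorics, which the paper leaves as an assertion, is also a harmless added detail.
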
 
\begin{proof}
Let $f$ be a nilpotent endomorphism. We induct on $w$, where the case $w=1$ is covered by \Thm{T:commendo} (recall that $w$ is just the maximal length of a chain of associated prime ideals). 
Divide $\op{Ass}(M)$ into two disjoint sets $\Pi:=\op{Min}(M)$, the set of all minimal primes, and its complement, $\Omega$, the set  of all embedded primes. For any $\pr\in \Pi$, by \Lem{L:primmin}, the associated primes of $\prim\pr M$ are all associated primes except $\pr$, whereas $\pr$ is the only associated prime of $M/\prim \pr M$. In particular, the endomorphism induced by $f$ on $M/\prim\pr M$ must be zero by \Cor{C:endouni}. It follows that $f(M)\sub\prim\pr M$. Since this holds for all $\pr \in \Pi$, the image $f(M)$ lies in the intersection $N$ of all $\prim\pr M$. By \Thm{T:latt}, the length of $N$ is determined by the intersection of the $\op{Ass}(\prim\pr M)$, and one easily verifies that the latter intersection is equal to $\Omega$. Hence $N$ is a binary module such that  $\op{Ass}(N)=\Omega$ has length $w-1$. Since $N$ is easily seen to be invariant, the restriction of $f$ to $N$ is, by induction, an endomorphism  whose $(w-1)$-th power is zero. As $f(M)\sub N$, we get $f^w(M)=0$, as we wanted to show. 
\end{proof} 

\providecommand{\bysame}{\leavevmode\hbox to3em{\hrulefill}\thinspace}
\providecommand{\MR}{\relax\ifhmode\unskip\space\fi MR }
% \MRhref is called by the amsart/book/proc definition of \MR.
\providecommand{\MRhref}[2]{%
  \href{http://www.ams.org/mathscinet-getitem?mr=#1}{#2}
}
\providecommand{\href}[2]{#2}

%\bibliographystyle{amsplain}
%\bibliography{myabbrev,references}

\end{document}